\definecolor{myurlcolor}{rgb}{0,0,0.3}
\definecolor{mycitecolor}{rgb}{0,0.4,0}
\definecolor{myrefcolor}{rgb}{0.4,0,0}
\setlist[enumerate]{label=(\alph*),itemsep=5pt,topsep=8pt}
\setlist[itemize]{label=$\triangleright$,itemsep=5pt,topsep=6pt}
\tikzstyle{morphism}=[fill=white, draw=black, shape=rectangle]
\tikzstyle{medium box}=[fill=white, draw=black, shape=rectangle, minimum width=1.3cm, minimum height=0.7cm]
\tikzstyle{large morphism}=[fill=white, draw=black, shape=rectangle, minimum width=1.7cm]
\tikzstyle{bn}=[fill=black, draw=black, shape=circle, inner sep=1.5pt]
\tikzstyle{state}=[fill=white, draw=black, regular polygon, regular polygon sides=3, minimum width=0.8cm, shape border rotate=180, inner sep=0pt]
\tikzstyle{medium state}=[fill=white, draw=black, regular polygon, regular polygon sides=3, minimum width=1.3cm, inner sep=0pt, shape border rotate=180]
\tikzstyle{large state}=[fill=white, draw=black, regular polygon, regular polygon sides=3, minimum width=2.2cm, shape border rotate=180, inner sep=0pt]
\tikzstyle{wide state}=[fill=white, draw=black, shape=isosceles triangle, minimum width=0.8cm, shape border rotate=270, inner sep=1.4pt, minimum height=0.5cm, isosceles triangle apex angle=80]
\tikzstyle{wn}=[fill=white, draw=black, shape=circle, inner sep=1.5pt]
\tikzstyle{blue morphism}=[fill=white, draw={rgb,255: red,15; green,0; blue,150}, shape=rectangle, text={rgb,255: red,15; green,0; blue,150}, tikzit category=blue]
\tikzstyle{blue state}=[fill=white, draw={rgb,255: red,15; green,0; blue,150}, shape=circle, regular polygon, regular polygon sides=3, minimum width=0.8cm, shape border rotate=180, inner sep=0pt, text={rgb,255: red,15; green,0; blue,150}, tikzit category=blue]
\tikzstyle{blue node}=[fill={rgb,255: red,15; green,0; blue,150}, draw={rgb,255: red,15; green,0; blue,150}, shape=circle, tikzit category=blue, inner sep=1.5pt]
\tikzstyle{blue}=[text={rgb,255: red,15; green,0; blue,150}, tikzit draw={rgb,255: red,191; green,191; blue,191}, tikzit category=blue, tikzit fill=white, inner sep=0mm]
\tikzstyle{blue wide state}=[fill=white, draw={rgb,255: red,15; green,0; blue,150}, text={rgb,255: red,15; green,0; blue,150}, shape=isosceles triangle, minimum width=0.8cm, shape border rotate=270, inner sep=1.4pt, minimum height=0.5cm, isosceles triangle apex angle=80]
\tikzstyle{red node}=[fill={rgb,255: red,150; green,0; blue,2}, draw={rgb,255: red,150; green,0; blue,2}, shape=circle, inner sep=1.5pt]
\tikzstyle{Purple node}=[fill={rgb,255: red,150; green,0; blue,150}, draw={rgb,255: red,150; green,0; blue,150}, shape=circle, inner sep=1.5pt]
\tikzstyle{red}=[text={rgb,255: red,150; green,0; blue,2}, inner sep=0mm, tikzit fill=white, tikzit draw={rgb,255: red,191; green,191; blue,191}]
\tikzstyle{purple}=[text={rgb,255: red,150; green,0; blue,150}, inner sep=0mm, tikzit fill=white, tikzit draw={rgb,255: red,191; green,191; blue,191}]
\tikzstyle{white morphism}=[fill=white, draw=white, shape=rectangle, tikzit draw={rgb,255: red,139; green,139; blue,139}]
\tikzstyle{curly brace}=[decorate, decoration={brace,amplitude=5pt}]
\tikzstyle{costate}=[fill=white, draw=black, shape=circle, regular polygon, regular polygon sides=3, minimum width=0.8cm, inner sep=0pt]
\tikzstyle{arrow}=[->]
\tikzstyle{dashed box}=[-, dashed]
\tikzstyle{blue arrow}=[-, draw={rgb,255: red,15; green,0; blue,150}, tikzit category=blue]
\tikzstyle{mapsto}=[{|->}]
\tikzstyle{double wire}=[-, double]
\newtheorem{dummy}{}[section]
\newtheorem{theorem}[dummy]{Theorem}\Crefname{thm}{Theorem}{Theorems}
\newtheorem{lemma}[dummy]{Lemma}\Crefname{lem}{Lemma}{Lemmas}
\newtheorem{proposition}[dummy]{Proposition}
\newtheorem{corollary}[dummy]{Corollary}\Crefname{cor}{Corollary}{Corollaries}
\newtheorem{conjecture}[dummy]{Conjecture}
\newtheorem{question}[dummy]{Question}
\newtheorem{definition}[dummy]{Definition}
\theoremstyle{remark}
\newtheorem{example}[dummy]{Example}\Crefname{ex}{Example}{Examples}
\newtheorem{remark}[dummy]{Remark}
\numberwithin{equation}{section}
\newcommand{\copi}{\mathsf{copy}}
\newcommand{\del}{\mathsf{del}}
\newcommand{\FinSet}{\mathsf{FinSet}}
\newcommand{\Set}{\mathsf{Set}}
\newcommand{\FinStoch}{\mathsf{FinStoch}}
\newcommand{\Stoch}{\mathsf{Stoch}}
\newcommand{\BorelStoch}{\mathsf{BorelStoch}}
\newcommand{\cC}{\mathsf{C}}
\newcommand{\cD}{\mathsf{D}}
\newcommand{\Hyp}{\mathsf{Hyp}}
\newcommand{\CatHyp}{\mathsf{CatHyp}}
\newcommand{\FinHyp}{\mathsf{FinHyp}}
\newcommand{\cI}{\mathsf{I}}
\newcommand{\Cat}{\mathsf{Cat}}
\newcommand{\MonCat}{\mathsf{MonCat}}
\newcommand{\gsCat}{\mathsf{gsCat}}
\newcommand{\MarkovCat}{\mathsf{MarkovCat}}
\newcommand{\cospan}{\mathsf{cospan}}
\newcommand{\FreeGS}{\mathsf{FreeGS}}
\newcommand{\FreeMarkov}{\mathsf{FreeMarkov}}
\newcommand{\Bloom}{\mathsf{Bloom}}
\newcommand{\op}{\mathrm{op}}
\newcommand{\id}{\mathrm{id}}
\newcommand{\N}{\mathbb{N}}
\newcommand{\Iin}{\mathsf{in}}
\newcommand{\Iout}{\mathsf{out}}
\newcommand{\hyp}{\mathsf{hyp}}		
\newcommand{\norm}{\mathsf{norm}}	
\newcommand{\newterm}[1]{\textbf{#1}}
\title{Free gs-monoidal categories and free Markov categories}
\author{Tobias Fritz}
\address{Department of Mathematics, University of Innsbruck, Austria}
\email{tobias.fritz@uibk.ac.at}
\author{Wendong Liang}
\address{Department of Mathematics, École Normale Supérieure Paris-Saclay, France}
\email{wendong.liang@ens-paris-saclay.fr}
\subjclass[2020]{18M30, 18M35, 62A09, 62H22.}
\date{\today}
\begin{document}

\maketitle

\begin{abstract}
	Categorical probability has recently seen significant advances through the formalism of Markov categories, within which several classical theorems have been proven in entirely abstract categorical terms.
	Closely related to Markov categories are gs-monoidal categories, also known as CD categories. 
	These omit a condition that implements the normalization of probability.
	Extending work of Corradini and Gadducci, we construct free gs-monoidal and free Markov categories generated by a collection of morphisms of arbitrary arity and coarity.
	For free gs-monoidal categories, this comes in the form of an explicit combinatorial description of their morphisms as structured cospans of labeled hypergraphs.
	These can be thought of as a formalization of gs-monoidal string diagrams ($=$term graphs) as a combinatorial data structure.
	We formulate the appropriate $2$-categorical universal property based on ideas of Walters and prove that our categories satisfy it.

	We expect our free categories to be relevant for computer implementations and we also argue that they can be used as statistical causal models generalizing Bayesian networks.
\end{abstract}

\tableofcontents

\newpage

\section{Introduction}

The categorical approach to probability theory has recently seen significant advances in the form of purely categorical proofs of a number of classical results in probability and theoretical statistics, such as the de Finetti theorem~\cite{fritz2020synthetic,fritzrischel2019zeroone,fritz2020representable,fritz2021definetti,jacobs2021hyper}.
This progress has been enabled by the advent of \newterm{Markov categories}.
These provide a categorical framework for probability that seems to capture the qualitative\footnote{In the sense of ``non-quantitative''.} intrinsic structure of probability remarkably accurately.
The usual Markov category in which probability theory is considered to take place is $\BorelStoch$, the category with standard Borel measurable spaces as objects and Markov kernels as morphisms.
But there are many other Markov categories~\cite[Sections~3--9]{fritz2020synthetic},  and one can instantiate the definitions and theorems of categorical probability in those as well, at least to the extent that they satisfy the relevant additional axioms.\footnote{For the most part, such instantiations have not been worked out yet.}

Closely related to Markov categories are \newterm{gs-monoidal categories}, which differ from Markov categories only by dropping an axiom which corresponds to the normalization of probability~\cite{cho2019disintegration}. In the strict case, these go back to Gadducci's thesis~\cite[Definition~3.9]{gadducci1996thesis} and a corresponding paper of Corradini and Gadducci~\cite{corradini1999gsmonoidal}, where they were considered with a different motivation in the context of term graphs and term graph rewriting.
Their intended application had suggested to study gs-monoidal categories freely generated by a given collection of morphisms, and indeed these had already been constructed and characterized in the single-sorted case with generators of coarity one~\cite{corradini1999gsmonoidal} and later defined in general~\cite{coccia2003lambda,bruni2014gs}.
Apparently unaware of the work of Corradini and Gadducci, free gs-monoidal categories made their first appearance in the probability context somewhat implicitly in the Master's thesis of Fong~\cite{fong2013causal}, with the aim of providing a categorical framework for Bayesian networks: a Bayesian network on any directed acyclic graph $G$ can be \emph{defined} as a gs-monoidal functor from a gs-monoidal category associated to $G$ to $\FinStoch$ if one has discrete variables, or to $\BorelStoch$ if one wants to allow continuous variables as well.
Finally, another independent appearance of free gs-monoidal categories is in the recent thesis of Stein~\cite[Section~7.3]{stein2021probabilistic}, where a construction as syntactical categories was given.

In this paper, we improve on these previous works in several ways:
\begin{itemize}
	\item We explicitly construct free gs-monoidal categories generated by morphisms of arbitrary arity and coarity and with arbitrarily many sorts (\Cref{freegs}).
	\item Our construction defines the morphisms as certain structured cospans, which makes the connection with the recent categorical approach to network theory explicit.
		These cospans can be thought of as a combinatorial description of gs-monoidal string diagrams, which underlines their network-like nature.
	\item We prove that our free gs-monoidal categories satisfy the ``right'' $2$-categorical universal property---rather than merely a $1$-categorical one---and without any strictness requirement on the target categories (\Cref{univ_prop}).
	\item We argue that morphisms in free gs-monoidal categories can themselves be considered as causal models generalizing the DAGs that underlie Bayesian networks (\Cref{causal}).
\end{itemize}
Moreover, in \Cref{bloom_circuitry} we show that every free gs-monoidal category comes equipped with a canonical factorization system that we call the \newterm{bloom-circuitry factorization}: every gs-monoidal string diagram factors uniquely into a ``pure bloom'', where all its boxes appear and every wire gets copied so as to connect to a unique output interface, followed by ``pure circuitry'' where wires get copied and discarded.
In \Cref{freemarkov}, we explain how our characterization can be adapted from free gs-monoidal categories to free Markov categories.
Finally, \Cref{causal} provides a sketch of how the morphisms in a free gs-monoidal category, or equivalently gs-monoidal string diagrams, can be viewed and utilized in statistics as causal models generalizing Bayesian networks.
This has been explored in more detail in the follow-up paper~\cite{fritz2023dseparation}.

\subsection*{Related work}

There has been independent and concurrent work by Milosavljevic and Zanasi~\cite{milosavljevic2022rewriting}, where free gs-monoidal categories have been constructed in the single-sorted case and a $1$-categorical universal property has been considered (essentially the same as our \Cref{cdterm_free1}); and by Yin~\cite{yin2022free}, where a topological description of free Markov categories has been given.

\medskip

Next, we discuss several points in a bit more detail.

\subsection*{Universal property and existence of free categories}

As with free categories with extra structure in general, it is not obvious what a suitable universal property of a free gs-monoidal category or free Markov category should be.
There are three meaningful choices, and we will prove that all of these properties hold. In increasing generality, these are:
\begin{itemize}
	\item A $1$-categorical universal property characterizing the free gs-monoidal/Markov category as a universal object in a category of colored PROPs using strict symmetric monoidal identity-on-objects functors as morphisms (second part of \Cref{cdterm_free1}).
		This is the type of universal property that has been considered for example in~\cite{enriquez2005functors,vallette2003props,fresse2010props} for free PROPs and in~\cite{bonchi2016rewriting,zanasi2017rewriting} for free hypergraph categories.

		Since colored PROPs with the relevant extra structure are the models of a multi-sorted algebraic theory, which is well-known to have free models, it is clear that free gs-monoidal categories and free Markov categories with this universal property exist: one simply needs to consider all possible expression formed out of the generators and quotient by the defining laws. For free gs-monoidal categories, this has been considered explicitly in~\cite[Definition~2.4]{coccia2003lambda} and~\cite[Definition~2]{bruni2014gs}.
		However, finding a concrete combinatorial description is a different and more involved problem.
	\item The same $1$-category universal property, generalized to strong symmetric monoidal functors that are not necessarily identity-on-objects (first part of \Cref{cdterm_free1}).
		This is the type of universal property considered by Corradini and Gadducci~\cite{corradini1999gsmonoidal} for strict gs-monoidal categories, and in~\cite{hackney2015category} for free colored PROPs.
	\item A full-fledged $2$-categorical universal property (\Cref{cdterm_free}).
		This property is somewhat more involved, but it is the one that we expect to be most relevant in practice, where identity-on-objects functors appear quite rarely.
		The formulation of this universal property follows an idea of Walters~\cite{walters1989free}.
		As far as we know, this type of universal property has not been considered much to date.
\end{itemize}
We place our emphasis on the third universal property, and merely note that the other two follow as a byproduct of essentially the same arguments.

In any case, it is clear that the mere existence of such free categories is not very useful: having a more explicit description of their hom-sets is a central tool for working with them in practice, and finding such a description is our main goal.

\subsection*{Are string diagrams topological or combinatorial?}

Since the morphisms in a free gs-monoidal category are gs-monoidal string diagrams,
this paper also belongs to a line of work in which string diagrams are viewed as morphisms in free monoidal categories (possibly with extra structure) and characterized as suitable combinatorial objects.
Among the earlier works approaching free symmetric monoidal categories in this way are results on free PROPs by Enriquez and Etingof~\cite[Lemma~2.1]{enriquez2005functors} and Vallette~\cite[p.~47]{vallette2003props}.\footnote{See also Fresse~\cite[Appendix~A]{fresse2010props} and Hackney and Robertson~\cite[Appendix~A]{hackney2015category}.}
This contrasts with the more traditional topological approach, in which string diagrams are considered topological entities, as in the original work of Joyal and Street~\cite{joyalstreet1991tensor}.
This distinction is analogous to the one in graph theory, where a graph can be considered (with somewhat different meaning) either as a continuous topological structure, for example as embedded on a surface, or as a purely combinatorial structure.

The topological formalization of string diagrams is adequate in particular in situations where higher categorical considerations or the cobordism hypothesis~\cite{baezdolan1995tqft} play a role.
In our present context, we believe that the combinatorial formalization is clearly preferable.
This sentiment has also been expressed in the recent work on string diagram rewriting by Bonchi et al.~\cite{bonchi2021rewrite}.
Indeed the combinatorial approach is both more useful in practice (for example for computer formalization) and more faithful to the intuition of a string diagram as a kind of ``network'' than a topological picture would be.
The fact that the relevant combinatorial structure is given by cospans of hypergraphs makes the combinatorial characterization match up nicely with the \newterm{structured cospan} approach to network theory~\cite{baezcourser2020structured}.\footnote{A minor technical difference is that our additional restrictions on the leg of a cospan, in the form of a monogamy condition adopted from~\cite{bonchi2016rewriting,zanasi2017rewriting}, have not been considered in the literature on categorical network theory so far. Thus our construction is not technically an instance of the structured cospan framework in its present form.}

However, it is conceivable that interesting topological constructions of free Markov categories can be given as well.
We refer to the independent and concurrent work of Yin for more on this~\cite{yin2022free}.

\subsection*{Applications}

One expected application is to computer implementations of gs-monoidal string diagrams.
As with the existing work on rewriting of string diagrams in symmetric monoidal and hypergraph categories~\cite{bonchi2016rewriting,zanasi2017rewriting,patterson2021wiring}, it is of interest for computer implementations to have a data structure for representing string diagrams in gs-monoidal categories and Markov categories, and moreover to have a rewriting theory for these.
While the former is given in this paper, the latter still remains to be developed.

Another expected area of application is to causal statistical models.
As we argue in \Cref{causal}, gs-monoidal string diagrams constitute a natural generalization of Bayesian networks which already incorporates features like latent variables or input variables without the need for additional annotations.
This is in line with recent work using gs-monoidal string diagrams in the context of causal inference~\cite{jacobs2021surgery,gitton2022inflation}\footnote{Although~\cite{gitton2022inflation} uses different terminology, its ``tensor networks'' are manifestly the same as gs-monoidal string diagrams.}.

\subsection*{Free Markov categories as examples}

Besides the possible importance of free Markov categories for the theory of causal models and causal inference, free gs-monoidal and free Markov categories constitute an addition to the bestiary of examples of such categories, and as such may also be useful as a testing ground for future conjectures in categorical probability.
In order to situate them in the landscape of examples, it would be useful to determine under what conditions they satisfy the additional axioms introduced in~\cite[Section~11]{fritz2020synthetic}.

While detailed definitions will be given in the main text, for now suffice it to say that the data that generates a free Markov category $\FreeMarkov_\Sigma$ is a collection of ``boxes'' $\Sigma$, called a \emph{monoidal signature}. These are exactly those boxes that appear in the string diagrams that form the morphisms of $\FreeMarkov_\Sigma$.

\begin{question}
	Under what conditions on $\Sigma$ does the free Markov category $\FreeMarkov_\Sigma$:
	\begin{enumerate}[label=(\roman*)]
		\item have conditionals?
		\item satisfy the causality axiom?
		\item satisfy the positivity axiom?
	\end{enumerate}
\end{question}

Also, the representability of a Markov category~\cite{fritz2020representable} is an important property relevant to categorical probability. 

\begin{conjecture}
	A free Markov category $\FreeMarkov_\Sigma$ is representable if and only if $\Sigma$ contains no boxes with at least one output.
\end{conjecture}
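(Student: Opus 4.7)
For the easy direction $(\Leftarrow)$, suppose every box in $\Sigma$ has zero outputs. The normalization axiom of Markov categories forces the unit $I$ to be terminal, so any morphism of the form $A \to I$ is identified with $\del_A$. Each such generating box is therefore absorbed into the discard map, and $\FreeMarkov_\Sigma$ is equivalent to the free Markov category on just the sorts (no boxes at all), in which every morphism is built from identities, symmetries, copies, and discards. All such morphisms are deterministic, making the category Cartesian monoidal and trivially representable via $PX := X$ and $\mathsf{samp}_X := \id_X$.

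For the hard direction $(\Rightarrow)$, suppose $\Sigma$ contains a box $\beta : A \to B$ with $B \neq I$. The key preliminary step is to show that the deterministic subcategory of $\FreeMarkov_\Sigma$ coincides with the free Cartesian monoidal category on the sorts of $\Sigma$, embedded via the structure maps. This follows from the bloom-circuitry factorization of \Cref{bloom_circuitry}: every morphism uniquely factors as circuitry after bloom, and the copy equation $\copi \circ f = (f \otimes f) \circ \copi$, translated to the cospan-of-hypergraphs description of \Cref{freemarkov}, forces the bloom part to contain no boxes, since duplicating a generating box yields a combinatorially distinct cospan. In particular $\beta$ itself is not deterministic, and deterministic hom-sets have cardinality $|\mathrm{Hom}_{\det}(Z, Y)| = \prod_s |Z|_s^{|Y|_s}$, where $|\cdot|_s$ counts positions of sort $s$.

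With this in hand, assume for contradiction that $\FreeMarkov_\Sigma$ is representable. The cleanest contradiction appears when $A$ is a single sort $X_0$: the two morphisms $\copi_B \circ \beta$ and $(\beta \otimes \beta) \circ \copi_A$ are distinct elements of $\mathrm{Hom}(X_0, B \otimes B)$ (their cospan representations use one versus two $\beta$-boxes), so this hom-set has at least two elements, contradicting the required equality with $|\mathrm{Hom}_{\det}(X_0, P(B \otimes B))|$, which is $1$ since $P(B \otimes B)$ must be a tensor power of $X_0$ (no deterministic morphism from $X_0$ lands in any other sort) and there is a unique sort-preserving function into any such target.

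For a general box, an analogous contradiction is obtained by considering $|\mathrm{Hom}(X_0^{\otimes p}, B^{\otimes n})|$ as a function of $p$ for any sort $X_0$ appearing in $A$: a bloom-circuitry enumeration shows this equals a polynomial in $p$ with a non-zero monomial for each value $m \in \{1, \ldots, n\}$ corresponding to the number of $\beta$-boxes used, whereas $|\mathrm{Hom}_{\det}(X_0^{\otimes p}, P(B^{\otimes n}))| = p^{k_n}$ is a single monomial for any admissible $P(B^{\otimes n})$. The main obstacle is verifying the characterization of the deterministic subcategory; once this is established via bloom-circuitry, the rest reduces to elementary combinatorial counting.
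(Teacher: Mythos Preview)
The paper does not prove this statement: it is explicitly stated as a \emph{Conjecture} and left open. The only guidance the paper offers is the remark that ``one may want to prove first that the deterministic morphisms in a free Markov category are exactly the pure circuitry ones,'' after which ``we leave the details to future work.'' So there is no paper proof to compare against.

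That said, your plan follows precisely the approach the paper suggests: you identify the characterization of deterministic morphisms as the main lemma and then reduce the non-representability to a cardinality obstruction on $\mathrm{Hom}_{\det}(X_0,-)$. A few comments on gaps in the plan as written. First, your argument for the hard direction tacitly assumes the domain $A$ of the offending box contains at least one sort $X_0$; the case of a box $\beta : I \to B$ with $B \neq I$ (a state) is not covered by your ``pick $X_0$ appearing in $A$'' move and needs a separate, though similar, argument via $\mathrm{Hom}_{\det}(I,-)$. Second, the claim that the bloom part of a deterministic morphism can contain no boxes deserves more care than a one-line appeal to ``duplicating a generating box yields a combinatorially distinct cospan'': one must check that no cancellation can occur after normalization, which is plausible but is exactly the step the paper flags as nontrivial. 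Third, the polynomial-counting argument for general $A$ is quite sketchy; since the special case $A = X_0$ already yields the contradiction once you have the deterministic-equals-circuitry lemma (and the $A=I$ case is handled separately), you may not need it at all.
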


Adding or removing a box with no outputs to $\Sigma$ leaves $\FreeMarkov_\Sigma$ invariant. Hence the conjecture equivalently states that the free Markov categories of that form are exactly the symmetric monoidal categories freely generated by a commutative comonoid on every generating object.
These are well-known to be those of the form $(\FinSet / W)^\op$ for any set $W$ with the cartesian monoidal structure.

To approach the conjecture, one may want to prove first that the deterministic morphisms in a free Markov category are exactly the pure circuitry ones.
We leave the details to future work.

\subsection*{Notation and conventions}

All categories in this paper will be assumed locally small.
We use the term \emph{$2$-category} in the strict sense.
$\Cat$ is the usual $2$-category of categories, and similarly $\MonCat$ is the $2$-category of monoidal categories, strong monoidal functors and monoidal transformations.

\subsection*{Acknowledgments}

We thank Peter Czaban, Arthur Parzygnat, Richard Samuelson, Rob Spekkens, Elie Wolfe and Fabio Zanasi for discussions and helpful feedback on a draft. Special thanks are due to Fabio Gadducci for a long email exchange and copious help with the literature on gs-monoidal categories.

\section{GS-monoidal categories and Markov categories}

We put our main focus here on stating the main definitions and refer to~\cite{fritz2020synthetic,fritzrischel2019zeroone,fritz2020representable,fritz2021definetti,jacobs2021hyper,cho2019disintegration} for applications to probability and statistics.
We comment on the history in \Cref{history}.

\begin{definition}
	\label{cd_cat}
	A \newterm{gs-monoidal category} is a symmetric monoidal category $(\cC, \otimes, I)$ with a
	commutative comonoid structure on each object $X$, consisting of a comultiplication and counit,
	\begin{equation}
			\begin{tikzpicture}
	\begin{pgfonlayer}{nodelayer}
		\node [style=none] (0) at (0, 0.5) {};
		\node [style=bn] (1) at (0, 0.75) {};
		\node [style=none] (2) at (-0.5, 1.25) {};
		\node [style=none] (3) at (0.5, 1.25) {};
		\node [style=none] (4) at (0, 0.25) {$X$};
		\node [style=none] (5) at (-2.5, 0.75) {$\mathsf{copy}_X$};
		\node [style=none] (6) at (-1.25, 0.75) {=};
	\end{pgfonlayer}
	\begin{pgfonlayer}{edgelayer}
		\draw (0.center) to (1);
		\draw [bend left=45, looseness=1.25] (1) to (2.center);
		\draw [bend right=45, looseness=1.25] (1) to (3.center);
	\end{pgfonlayer}
\end{tikzpicture} \hspace{2cm}
			\begin{tikzpicture}
	\begin{pgfonlayer}{nodelayer}
		\node [style=none] (5) at (0, 0.25) {};
		\node [style=bn] (6) at (0, 1) {};
		\node [style=none] (7) at (0, 0) {$X$};
		\node [style=none] (8) at (-2.25, 0.5) {$\mathsf{del}_X$};
		\node [style=none] (9) at (-1, 0.5) {=};
	\end{pgfonlayer}
	\begin{pgfonlayer}{edgelayer}
		\draw (5.center) to (6);
	\end{pgfonlayer}
\end{tikzpicture}

	\end{equation}
	which satisfy the commutative comonoid equations,
	\begin{equation}\begin{split}\label{comonoid_eq}
			\begin{tikzpicture}[scale=0.7]
	\begin{pgfonlayer}{nodelayer}
		\node [style=none] (0) at (-9, 1.25) {};
		\node [style=none] (1) at (-8, 1.25) {};
		\node [style=none] (2) at (-9, 0.25) {};
		\node [style=none] (3) at (-8, 0.25) {};
		\node [style=none] (5) at (-9, 0) {};
		\node [style=none] (6) at (-8, 0) {};
		\node [style=none] (8) at (-8.5, -1) {};
		\node [style=bn] (9) at (-8.5, -0.5) {};
		\node [style=none] (10) at (-7, 0) {$=$};
		\node [style=none] (11) at (-6, 1) {};
		\node [style=none] (12) at (-5, 1) {};
		\node [style=none] (14) at (-5.5, -1) {};
		\node [style=none] (15) at (-8.5, 0.75) {};
		\node [style=none] (16) at (-8.5, -1.25) {$X$};
		\node [style=none] (17) at (1, -1.25) {$X$};
		\node [style=bn] (18) at (-5.5, 0) {};
		\node [style=none] (19) at (-5.5, -1.25) {$X$};
		\node [style=none] (20) at (-1.5, -1.25) {$X$};
		\node [style=none] (22) at (-1, 1) {};
		\node [style=none] (23) at (-1.5, -1) {};
		\node [style=bn] (24) at (-1.5, 0) {};
		\node [style=none] (26) at (4.75, -1.25) {$X$};
		\node [style=bn] (27) at (-2, 1) {};
		\node [style=none] (28) at (0, 0) {$=$};
		\node [style=none] (29) at (1, 1) {};
		\node [style=none] (30) at (1, -1) {};
		\node [style=none] (31) at (9, -1.25) {$X$};
		\node [style=none] (32) at (4, 1) {};
		\node [style=none] (33) at (4.75, 1) {};
		\node [style=none] (34) at (5.75, 1) {};
		\node [style=bn] (35) at (5.25, 0.25) {};
		\node [style=bn] (36) at (4.75, -0.25) {};
		\node [style=none] (37) at (4.75, -1) {};
		\node [style=none] (38) at (6.75, 0) {$=$};
		\node [style=none] (39) at (9.75, 1) {};
		\node [style=none] (40) at (9, 1) {};
		\node [style=none] (41) at (8, 1) {};
		\node [style=bn] (42) at (8.5, 0.25) {};
		\node [style=bn] (43) at (9, -0.25) {};
		\node [style=none] (44) at (9, -1) {};
	\end{pgfonlayer}
	\begin{pgfonlayer}{edgelayer}
		\draw (2.center) to (5.center);
		\draw (3.center) to (6.center);
		\draw [bend left] (6.center) to (9);
		\draw [bend right, looseness=0.75] (5.center) to (9);
		\draw (9) to (8.center);
		\draw [bend left, looseness=0.75] (2.center) to (15.center);
		\draw [bend right, looseness=0.75] (3.center) to (15.center);
		\draw [bend right] (15.center) to (1.center);
		\draw [bend left] (15.center) to (0.center);
		\draw (14.center) to (18);
		\draw [bend left] (18) to (11.center);
		\draw [bend right] (18) to (12.center);
		\draw (23.center) to (24);
		\draw [bend right] (24) to (22.center);
		\draw [bend right] (27) to (24);
		\draw (30.center) to (29.center);
		\draw [bend right, looseness=0.75] (33.center) to (35);
		\draw [bend left, looseness=0.75] (34.center) to (35);
		\draw [bend right] (32.center) to (36);
		\draw [bend right=15] (36) to (35);
		\draw (37.center) to (36);
		\draw [bend left, looseness=0.75] (40.center) to (42);
		\draw [bend right, looseness=0.75] (41.center) to (42);
		\draw [bend left] (39.center) to (43);
		\draw [bend left=15] (43) to (42);
		\draw (44.center) to (43);
	\end{pgfonlayer}
\end{tikzpicture}
	\end{split}\end{equation}
	These comonoid structures must be multiplicative with respect to the monoidal structure:
	\begin{align}\begin{split}\label{compatible_monoid}
			\begin{tikzpicture}
	\begin{pgfonlayer}{nodelayer}
		\node [style=none] (46) at (-6.25, -0.5) {};
		\node [style=bn] (47) at (-6.25, 0.25) {};
		\node [style=none] (48) at (-7, 1) {};
		\node [style=none] (49) at (-5.5, 1) {};
		\node [style=none] (50) at (-4.5, 0) {=};
		\node [style=none] (51) at (-2.75, -0.5) {};
		\node [style=bn] (52) at (-2.75, 0.25) {};
		\node [style=none] (53) at (-3.5, 1) {};
		\node [style=none] (54) at (-2, 1) {};
		\node [style=none] (55) at (-2.25, -0.5) {};
		\node [style=bn] (56) at (-2.25, 0.25) {};
		\node [style=none] (57) at (-3, 1) {};
		\node [style=none] (58) at (-1.5, 1) {};
		\node [style=none] (59) at (2.25, -0.5) {};
		\node [style=bn] (60) at (2.25, 0.25) {};
		\node [style=none] (61) at (1.5, 1) {};
		\node [style=none] (62) at (3, 1) {};
		\node [style=none] (63) at (5, 0.75) {};
		\node [style=none] (64) at (5, -0.75) {};
		\node [style=none] (65) at (6.25, -0.75) {};
		\node [style=none] (66) at (6.25, 0.75) {};
		\node [style=none] (67) at (4, 0) {=};
		\node [style=none] (72) at (-6.25, -0.75) {$X \otimes Y$};
		\node [style=none] (73) at (-2.75, -0.75) {$X$};
		\node [style=none] (74) at (-2.25, -0.75) {$Y$};
		\node [style=none] (75) at (2.25, -0.75) {$I$};
	\end{pgfonlayer}
	\begin{pgfonlayer}{edgelayer}
		\draw [bend left=45, looseness=0.75] (47) to (48.center);
		\draw [bend right=45, looseness=0.75] (47) to (49.center);
		\draw (47) to (46.center);
		\draw [bend left=45, looseness=0.75] (52) to (53.center);
		\draw [bend right=45, looseness=0.75] (52) to (54.center);
		\draw (52) to (51.center);
		\draw [bend left=45, looseness=0.75] (56) to (57.center);
		\draw [bend right=45, looseness=0.75] (56) to (58.center);
		\draw (56) to (55.center);
		\draw [bend left=45, looseness=0.75] (60) to (61.center);
		\draw [bend right=45, looseness=0.75] (60) to (62.center);
		\draw (60) to (59.center);
		\draw [style=dashed box] (63.center) to (64.center);
		\draw [style=dashed box] (64.center) to (65.center);
		\draw [style=dashed box] (65.center) to (66.center);
		\draw [style=dashed box] (66.center) to (63.center);
	\end{pgfonlayer}
\end{tikzpicture} \\
			\begin{tikzpicture}
	\begin{pgfonlayer}{nodelayer}
		\node [style=none] (0) at (-6.25, -0.5) {};
		\node [style=none] (1) at (-4.5, 0) {=};
		\node [style=none] (2) at (-3, -0.5) {};
		\node [style=none] (3) at (-2, -0.5) {};
		\node [style=none] (4) at (2.25, -0.5) {};
		\node [style=none] (5) at (4, 0) {=};
		\node [style=bn] (6) at (-6.25, 0.5) {};
		\node [style=bn] (7) at (-3, 0.5) {};
		\node [style=bn] (8) at (-2, 0.5) {};
		\node [style=bn] (9) at (2.25, 0.5) {};
		\node [style=none] (10) at (5, 0.75) {};
		\node [style=none] (11) at (5, -0.75) {};
		\node [style=none] (12) at (6.25, -0.75) {};
		\node [style=none] (13) at (6.25, 0.75) {};
		\node [style=none] (14) at (-6.25, -0.75) {$X\otimes Y$};
		\node [style=none] (15) at (-3, -0.75) {$X$};
		\node [style=none] (16) at (-2, -0.75) {$Y$};
		\node [style=none] (17) at (2.25, -0.75) {$I$};
	\end{pgfonlayer}
	\begin{pgfonlayer}{edgelayer}
		\draw (6) to (0.center);
		\draw (7) to (2.center);
		\draw (8) to (3.center);
		\draw (9) to (4.center);
		\draw [style=dashed box] (10.center) to (11.center);
		\draw [style=dashed box] (11.center) to (12.center);
		\draw [style=dashed box] (12.center) to (13.center);
		\draw [style=dashed box] (13.center) to (10.center);
	\end{pgfonlayer}
\end{tikzpicture}
	\end{split}\end{align}
	A gs-monoidal category is a \newterm{Markov category} if the monoidal unit $I$ is terminal.\footnote{Note that this makes the second set of equations in~\eqref{compatible_monoid} trivially true.}
\end{definition}

In this definition, ``gs'' stands for \emph{garbage} (deletion) and \emph{sharing} (copy), as per the intended interpretation of these comonoid structures.
By definition, a Markov category is equivalently a gs-monoidal category which satisfies the naturality of discarding,
\begin{equation}
	\begin{split}
	\label{nat_counit}
	\begin{tikzpicture}
	\begin{pgfonlayer}{nodelayer}
		\node [style=none] (0) at (-1, 0.25) {};
		\node [style=bn] (1) at (-1, 1.75) {};
		\node [style=none] (2) at (1, 0.25) {};
		\node [style=none] (5) at (0, 1) {=};
		\node [style=bn] (6) at (1, 1.75) {};
		\node [style=morphism] (7) at (-1, 1) {$f$};
	\end{pgfonlayer}
	\begin{pgfonlayer}{edgelayer}
		\draw (2.center) to (6);
		\draw (0.center) to (7);
		\draw (7) to (1);
	\end{pgfonlayer}
\end{tikzpicture}
	\end{split}
\end{equation}
for all morphisms $f$. This corresponds to the normalization of probability. 
We refer to~\cite[Section~3--9]{fritz2020synthetic} for a list of examples of Markov categories.

\begin{remark}[History]
	\label{history}
	The notions of gs-monoidal category and Markov category already have a multifarious history.
	As far as we know, gs-monoidal categories made their first appearance in the 1996 thesis of Gadducci~\cite[Definition~3.9]{gadducci1996thesis}, with the minor difference that they were required to be strict.
	This was motivated by graphical notations for formal languages, and is relevant more generally for resource sharing in computer science~\cite{hasegawa1997sharing}.
	In 1999, a similar definition was given in independent work of Golubtsov~\cite{golubtsov1999axiomatic} as \emph{categories of information transformers}, already with applications to statistics in mind.\footnote{There also are a number of overlapping follow-up papers by Golubtsov, see~\cite{fritz2020synthetic} further discussion and the references.}

	Subsequently, other works on string-diagrammatic approaches of probability appeared, such as by Coecke and Spekkens~\cite{coecke2012picturing}, apparently unaware of the above earlier works and using somewhat different categorical axiomatics.
	The first work in probability to consider something very close to gs-monoidal categories as defined above seems to have been Fong's 2013 Master's thesis~\cite{fong2013causal}, although no explicit definition of the relevant kind of category is given.
	GS-monoidal categories were considered in their present form as \emph{CD-categories} in a 2019 paper of Cho and Jacobs~\cite{cho2019disintegration}.
	Markov categories were introduced there as well under the name \emph{affine CD-categories}.
	The term \emph{Markov category} was then introduced in a 2020 paper by the first-named present author~\cite{fritz2020synthetic}, based on the interpretation of the morphisms as generalized Markov kernels.
\end{remark}

The following $2$-category of gs-monoidal categories does not seem to have been considered before.
However, it is an instance of the more general categorical machinery of~\cite[Definition~4.1]{fongspivak2019bells}, and its Markov version is~\cite[Definition~10.14]{fritz2020synthetic}.

\begin{definition}
	Let $\cC$ and $\cD$ be gs-monoidal categories.
	\label{cd_cells}
	\begin{enumerate}
		\item A \newterm{strong gs-monoidal functor} $F : \cC \to \cD$ is a symmetric monoidal functor such that the diagrams
			\[
				\begin{tikzcd}
					& F(X) \ar{dl}[swap]{F(\copi_X)} \ar{dr}{\copi_{F(X)}} \\
					F(X \otimes X) \ar{rr}{\cong}	& & F(X) \otimes F(X)
				\end{tikzcd}
				\qquad\quad
				\begin{tikzcd}
					& F(X) \ar{dl}[swap]{F(\del_X)} \ar{dr}{\del_{F(X)}} \\
					F(I) \ar{rr}{\cong}	& & I
				\end{tikzcd}
			\]
			commute for all $X \in \cC$, where the horizontal isomorphisms are the coherence isomorphisms of $F$.
		\item A \newterm{gs-monoidal transformation} is a monoidal natural transformation between gs-monoidal functors.
	\end{enumerate}
	We write $\gsCat$ for the $2$-category of gs-monoidal categories, strong gs-monoidal functors and gs-monoidal transformations.
\end{definition}

As with monoidal functors in general, the definition of strong gs-monoidal functor has a couple of variants: a \newterm{lax gs-monoidal functor} is a symmetric lax monoidal functor such that the above triangles commute with the laxators in place of the coherence isomorphisms. An oplax gs-monoidal functor is defined analogously. A \newterm{strict gs-monoidal functor} is a symmetric monoidal functor $F : \cC \to \cD$ which preserves the gs-monoidal structure on the nose,
\[
	F(\copi_X) = \copi_{FX}, \qquad F(\del_X) = \del_{FX}.
\]
If $\cC$ and $\cD$ are Markov categories, then we also speak of strong/lax/strict \newterm{Markov functors} as well as \newterm{Markov transformations}, resulting in a $2$-category of Markov categories that we denote by $\MarkovCat$.
It is a full sub-$2$-category of $\gsCat$ by definition.

\section{Construction of free gs-monoidal categories}
\label{freegs}

We start with some preparation by introducing the relevant combinatorial structures, following the definitions used by Bonchi et al.~\cite{bonchi2016rewriting} with some minor adaptations.

\begin{definition}
	\label{hypergraph}
	We write $\cI$ for the category where:
	\begin{itemize}
		\item Objects are pairs of natural numbers\footnote{Our convention is $0 \in \N$.} $(k,\ell) \in \mathbb{N} \times \mathbb{N}$, and there is one extra object denoted $*$.
		\item There are $k + \ell$ non-identity morphisms from $(k,\ell)$ to $*$, denoted
			\[
				\Iin_1, \: \ldots, \: \Iin_k, \: \Iout_1, \: \ldots, \: \Iout_\ell \; : \; (k, \ell) \longrightarrow *,
			\]
			and no other non-identity morphisms.
	\end{itemize}
	A \newterm{hypergraph} is a functor $G : \cI \to \Set$.\footnote{A more unambiguous term would be \newterm{directed hypergraph}, but we omit the adjective for brevity.}$^,$\footnote{Note that the definition used in \cite{bonchi2016rewriting} and~\cite{zanasi2017rewriting} requires the functor to land in $\FinSet$. We prefer to allow infinite hypergraphs, since otherwise we could only consider free gs-monoidal categories generated by finitely many morphisms, which would be unnecessarily restrictive.}
\end{definition}

A few remarks may help clarify the definition.

\begin{itemize}
	\item We do not need to define composition in $\cI$ since there are no composable pairs of non-identity morphisms.
	\item A hypergraph $G : \cI \to \FinSet$ has a set of nodes given by $G(*)$. We will call nodes \newterm{wires} because of the role they play for string diagrams. Moreover, for all $k,\ell \in \N$ there is a set $G(k,\ell)$ of hyperedges, which we will call \newterm{boxes}, with $k$ input wires and $\ell$ output wires. The maps
		\[
			G(\Iin_1), \: \ldots, \: G(\Iin_k), \: G(\Iout_1), \ldots, \: G(\Iout_\ell) \; : \; G(k, \ell) \longrightarrow G(*)
		\]
		assign to every such box its lists of input wires and output wires.
		In particular, the input and output wires of a box are both ordered.
	\item In order to have a more intuitive notation, we also write
		\[
			W(G) \coloneqq G(*), \qquad\quad B_{k,\ell}(G) \coloneqq G(k,\ell), \qquad\quad B(G) \coloneqq \coprod_{k,\ell \in \N} B_{k,\ell}(G)
		\]
		for the sets of wires and boxes.
		We also abbreviate $G(\Iin_i)$ and $G(\Iout_j)$ to $\Iin_i$ and $\Iout_j$, respectively.
		For a box $b$ and wire $w$, we put
		\begin{align*}
			\Iin(b,w) & \coloneqq |\{ i = 1, \ldots, k \: \mid \: \Iin_i(b) = w \}|, \\
			\Iout(b,w) & \coloneqq |\{ j = 1, \ldots, \ell \: \mid \: \Iout_j(b) = w \}|,
		\end{align*}
		for the number of times that $w$ appears as an input or output wire of $b$, respectively.
	\item In~\cite{hackney2015category}, a very similar definition has been used under the term \emph{megagraph}. A megagraph comes equipped with additional actions of symmetric groups corresponding to permutations of inputs and outputs.
\end{itemize}

\begin{definition}
	For hypergraphs $G$ and $H$, a \newterm{hypergraph morphism} $G \to H$ is a natural transformation $G \Rightarrow H$.
\end{definition}

\begin{itemize}
	\item A morphism of hypergraphs $p : G \Rightarrow H$ thus consists of a map $p(*) : W(G) \to W(H)$, taking wires to wires, and maps $p(k,\ell) : G(k, \ell) \to H(k, \ell)$, taking boxes to boxes, such that the diagrams
		\[\begin{tikzcd}
			{B_{k,\ell}(G)} && {B_{k,\ell}(H)} && {B_{k,\ell}(G)} && {B_{k,\ell}(H)} \\ \\
			W(G) && W(H) && W(G) && W(H)
			\arrow["\Iin_i"', from=1-1, to=3-1]
			\arrow["\Iin_i", from=1-3, to=3-3]
			\arrow["p_{k,\ell}"', from=1-1, to=1-3]
			\arrow["p_*", from=3-1, to=3-3]
			\arrow["\Iout_j"', from=1-5, to=3-5]
			\arrow["\Iout_j", from=1-7, to=3-7]
			\arrow["p_{k,\ell}"', from=1-5, to=1-7]
			\arrow["p_*", from=3-5, to=3-7]
		\end{tikzcd}\]
		commute for all $i=1,\ldots,k$ and $j=1,\ldots,\ell$.
		These conditions state exactly that the input and output wires of each box must be preserved.
	\item We thereby obtain a category of hypergraphs denoted $\Hyp$, which by definition is exactly the presheaf topos $\Set^\cI$.
\end{itemize}

\begin{remark}
	\label{internal_hyp}
	Every monoidal category $\cC$ has an underlying hypergraph, in which the wires are the objects of $\cC$ and the boxes of arity $(k,\ell)$ are the morphisms from a $k$-fold tensor product of objects to an $\ell$-fold tensor product of objects.

	Following an idea of Walters~\cite{walters1989free}, we can refine this construction to a $2$-functor
	\[
		\hyp \: : \: \MonCat \longrightarrow \CatHyp,
	\]
	where $\CatHyp$ is the $2$-category of categories internal to $\Hyp$.
	Indeed if $\cC$ is a small monoidal category, then it has an underlying $\hyp(\cC) \in \CatHyp$ defined as follows.
	Its category of wires is exactly $\cC$,
	\[
		W(\hyp(\cC)) \coloneqq \cC,
	\]
	while the category of boxes $B_{k,\ell}(\hyp(\cC))$ is the ``hyperarrow category'' of $\cC$, by which we mean the category with set of objects given by
	\[
		\coprod_{A_1, \ldots, A_k, B_1, \ldots, B_\ell \,\in\, \cC} \cC(A_1 \otimes \cdots \otimes A_k, B_1 \otimes \cdots \otimes B_\ell),
	\]
	and where a morphism from an object
	\[
		f \: : \: A_1 \otimes \cdots \otimes A_k \longrightarrow B_1 \otimes \cdots \otimes B_\ell
	\]
	to another object
	\[
		g : A'_1 \otimes \cdots \otimes A'_k \longrightarrow B'_1 \otimes \cdots \otimes B'_\ell
	\]
	consists of families of morphisms $A_i \to A'_i$ and $B_j \to B'_j$ making the obvious square commute, and with composition defined in the obvious way.
	The source and target functors $B_{k,\ell}(\hyp(\cC)) \to W(\hyp(\cC))$ are also the obvious ones.

	If $F : \cC \to \cD$ is a strong monoidal functor, then $F$ induces an internal functor
	\[
		\hyp(F) \: : \:	\hyp(\cC) \longrightarrow \hyp(\cD)
	\]
	defined as just $F$ on the category of wires.
	On the category of boxes, it is defined on objects in terms of the composite
	\[
		\begin{tikzcd}
			\cC(A_1 \otimes \cdots \otimes A_k, B_1 \otimes \cdots \otimes B_\ell) \ar{r}{F} & \cD(F(A_1 \otimes \cdots \otimes A_k), F(B_1, \otimes \cdots \otimes B_\ell)) \ar{d} \\
			& \cD(F(A_1) \otimes \cdots \otimes F(A_k), F(B_1) \otimes \cdots \otimes F(B_\ell)),
		\end{tikzcd}
	\]
	where the second arrow is given by composition with the relevant coherence isomorphisms of $F$.
	On morphisms in the category of boxes, $\hyp(F)$ acts as just $F$ componentwise on morphisms.
	It is straightforward to see that this satisfies functoriality.

	For strong monoidal functors $F,\, G : \cC \to \cD$, a monoidal natural transformation $\alpha : F \Rightarrow G$ induces an internal natural transformation
	\[
		\hyp(\alpha) \: : \: \hyp(F) \Longrightarrow \hyp(G)
	\]
	defined as $\alpha$ at the level of wires.
	To a box object $f \in B_{k,\ell}(\hyp(\cC))$, it assigns the family of morphisms
	\[
		\alpha_{\Iin_1(b)}, \: \ldots, \: \alpha_{\Iin_k(b)}, \: 
		\alpha_{\Iout_1(b)}, \: \ldots, \: \alpha_{\Iout_\ell(b)},
	\]
	which indeed has the correct type, and where naturality is a straightforward consequence of the naturality of $\alpha$.

	It is straightforward to see that the above definitions assemble to a $2$-functor $\hyp : \MonCat \to \CatHyp$ as promised above.
	Furthermore, by restriction along the forgetful $2$-functor $\gsCat \to \MonCat$, we obtain a $2$-functor $\hyp : \gsCat \to \CatHyp$ as well, which is how we will understand $\hyp$ from now on.
\end{remark}

Hypergraphs serve a dual purpose in our context: first, the data that generates a free gs-monoidal category is a hypergraph $\Sigma$ that we call the \newterm{monoidal signature}.\footnote{Also called \emph{hypersignature} in~\cite{coccia2003lambda}.}
It may have infinitely many wires and/or boxes.  See \Cref{monoidal_sig} for an example. Second, the \emph{morphisms} in a free gs-monoidal category are themselves constructed in terms of finite hypergraphs.
This is based on the idea that every string diagram is a hypergraph with wires and boxes, with given sequences of input and output wires for each box~\cite{bonchi2016rewriting}.
For a string diagram in a gs-monoidal category, the input and output wires of a copy morphism are considered as one and the same wire, so that the ``wires'' in our sense are actually the \emph{connected components} of the ``circuitry'' built out of copies and discards. \Cref{cd_string_hypergraph} provides an example that we discuss in more detail below.

The formal construction that follows now elaborates on this intuitive idea: we will \emph{define} a gs-monoidal string diagram as a hypergraph with additional labeling and interfacing information.

\begin{definition}
	\label{hypergraph_finite}
	A hypergraph $G$ is:
	\begin{enumerate}[label=(\roman*)]
		\item \newterm{finite} if both the set of wires $W(G)$ and the set of boxes $B(G)$ are finite.
		\item \newterm{discrete} if there are no boxes, $B(G) = \emptyset$.
	\end{enumerate}
\end{definition}

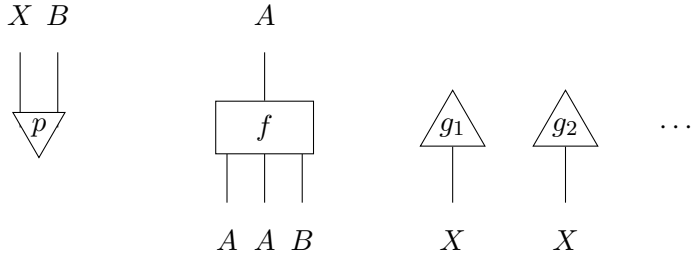
\begin{figure}
	\begin{tikzpicture}
	\begin{pgfonlayer}{nodelayer}
		\node [style=none] (0) at (-0.5, -1) {};
		\node [style=none] (1) at (0, 1) {};
		\node [style=state] (2) at (-3, 0) {$p$};
		\node [style=none] (3) at (-3.25, 1) {};
		\node [style=none] (4) at (-2.75, 1.5) {$B$};
		\node [style=none] (5) at (0, 1.5) {$A$};
		\node [style=none] (6) at (-0.5, -1.5) {$A$};
		\node [style=none] (7) at (-3.25, 1.5) {$X$};
		\node [style=none] (8) at (-2.75, 1) {};
		\node [style=none] (9) at (0, -1) {};
		\node [style=none] (10) at (0, -1.5) {$A$};
		\node [style=none] (11) at (0.5, -1.5) {$B$};
		\node [style=none] (12) at (0.5, -1) {};
		\node [style=none] (13) at (-3.25, 0) {};
		\node [style=none] (14) at (-2.75, 0) {};
		\node [style=medium box] (15) at (0, 0) {$f$};
		\node [style=costate] (16) at (2.5, 0) {$g_1$};
		\node [style=none] (17) at (2.5, -1) {};
		\node [style=none] (18) at (2.5, -1.5) {$X$};
		\node [style=costate] (19) at (4, 0) {$g_2$};
		\node [style=none] (20) at (4, -1) {};
		\node [style=none] (21) at (4, -1.5) {$X$};
		\node [style=none] (22) at (5.5, 0) {$\cdots$};
		\node [style=none] (23) at (-0.5, -0.25) {};
		\node [style=none] (24) at (0, -0.25) {};
		\node [style=none] (25) at (0.5, -0.25) {};
	\end{pgfonlayer}
	\begin{pgfonlayer}{edgelayer}
		\draw [in=-90, out=90, looseness=1.25] (14.center) to (8.center);
		\draw [in=90, out=-90, looseness=1.25] (3.center) to (13.center);
		\draw (16) to (17.center);
		\draw (19) to (20.center);
		\draw (15) to (1.center);
		\draw (23.center) to (0.center);
		\draw (24.center) to (9.center);
		\draw (25.center) to (12.center);
	\end{pgfonlayer}
\end{tikzpicture}
	\caption{An example of a hypergraph $\Sigma$, drawn as a monoidal signature, with set of wires $W(\Sigma) = \{A,B,X\}$ and boxes $B(\Sigma) = \{p,f,g_1,g_2,\ldots\}$. Note that the input and output wires of a box are ordered, while the sets $W(\Sigma)$ and $B(\Sigma)$ themselves are not.}
	\label{monoidal_sig}
\end{figure}

Note that this finiteness condition is stronger than merely requiring $G$ to be a functor $\cI \to \FinSet$.\footnote{This has apparently has been missed in~\cite{bonchi2016rewriting}, where the functor $\cI \to \FinSet$ definition is stated but \Cref{hypergraph_finite} is used.}
The finite hypergraphs form a full subcategory of $\Hyp$ that we denote by $\FinHyp$.

String diagrams in gs-monoidal categories are ``directed'' in the sense that it is not possible to walk in a closed loop by traversing boxes from a source wire to a target wire.
This is formalized as follows.

\begin{definition}
	\label{acyclic}
	A \newterm{cycle} in a hypergraph $G$ is a finite alternating sequence of wires and boxes $(w_1,b_1,\ldots,w_n,b_n,w_{n+1})$ such that $w_{n+1} = w_1$ and
	\[
		\Iin(b_i, w_i) \ge 1, \qquad \Iout(b_i, w_{i+1}) \ge 1 
	\]
	for all $i = 1, \ldots, n$.
	We call $G$ \newterm{acyclic} if it does not have a cycle.
\end{definition}

\begin{figure}
	\begin{tikzpicture}
	\begin{pgfonlayer}{nodelayer}
		\node [style=none] (0) at (0.25, -1.25) {};
		\node [style=none] (1) at (-0.5, 1.25) {};
		\node [style=none] (3) at (0.75, -1.5) {};
		\node [style=none] (4) at (-0.25, -3.25) {$A$};
		\node [style=none] (5) at (2, -2.75) {$B$};
		\node [style=none] (6) at (1.25, -1.25) {};
		\node [style=medium box] (7) at (0.75, -1) {$f$};
		\node [style=none] (8) at (0.25, -1.25) {};
		\node [style=none] (9) at (0.75, -1.25) {};
		\node [style=none] (10) at (1.25, -1.25) {};
		\node [style=state] (11) at (1.5, -3.5) {$p$};
		\node [style=none] (13) at (1.75, -2.75) {};
		\node [style=none] (14) at (1.25, -3.5) {};
		\node [style=none] (15) at (1.75, -3.5) {};
		\node [style=bn] (16) at (0, 3.25) {};
		\node [style=none] (17) at (-0.25, 2.75) {$A$};
		\node [style=none] (18) at (0, 1.25) {};
		\node [style=medium box] (20) at (0, 2) {$f$};
		\node [style=none] (21) at (-0.5, 1.75) {};
		\node [style=none] (22) at (0, 1.75) {};
		\node [style=none] (23) at (0.5, 1.25) {};
		\node [style=none] (24) at (0, -3.5) {};
		\node [style=none] (26) at (-0.75, 4) {};
		\node [style=none] (27) at (0.75, 4) {};
		\node [style=none] (28) at (0.75, -3) {$X$};
		\node [style=none] (29) at (0.25, -0.25) {$A$};
		\node [style=none] (31) at (0, -4.5) {$3$};
		\node [style=none] (32) at (-0.75, 4.5) {$1$};
		\node [style=none] (33) at (0.75, 4.5) {$2$};
		\node [style=none] (39) at (-2, -4.5) {$1$};
		\node [style=none] (40) at (-2, -4) {};
		\node [style=none] (41) at (-2.25, -3.25) {$A$};
		\node [style=bn] (42) at (-2, -2.5) {};
		\node [style=none] (43) at (-1, -4.5) {$2$};
		\node [style=none] (44) at (-1, -4) {};
		\node [style=none] (46) at (3.75, -4.5) {$4$};
		\node [style=none] (47) at (3.75, -4) {};
		\node [style=none] (49) at (3.25, -2.25) {$B$};
		\node [style=none] (50) at (-1.25, -3.25) {$X$};
		\node [style=bn] (51) at (-1, -2.75) {};
		\node [style=costate] (52) at (1, -2.5) {$g_2$};
		\node [style=none] (53) at (0.5, 1.75) {};
		\node [style=none] (54) at (0, -4) {};
	\end{pgfonlayer}
	\begin{pgfonlayer}{edgelayer}
		\draw [in=-90, out=90] (7) to (1.center);
		\draw (8.center) to (0.center);
		\draw (9.center) to (3.center);
		\draw (10.center) to (6.center);
		\draw [in=-90, out=90, looseness=1.25] (15.center) to (13.center);
		\draw (20) to (16);
		\draw (22.center) to (18.center);
		\draw [in=90, out=-90] (3.center) to (24.center);
		\draw [in=165, out=-90] (26.center) to (16);
		\draw [in=-90, out=15] (16) to (27.center);
		\draw (1.center) to (21.center);
		\draw (42) to (40.center);
		\draw [in=-90, out=15] (42) to (8.center);
		\draw [in=-90, out=165, looseness=0.75] (42) to (18.center);
		\draw [in=-90, out=90, looseness=0.50] (47.center) to (23.center);
		\draw [in=90, out=-90, looseness=1.50] (52) to (14.center);
		\draw (51) to (44.center);
		\draw [in=-90, out=90] (13.center) to (10.center);
		\draw (53.center) to (23.center);
		\draw (54.center) to (24.center);
	\end{pgfonlayer}
\end{tikzpicture}
	\caption{An example of a gs-monoidal categorical string diagram in the monoidal signature $\Sigma$ of \Cref{monoidal_sig}. The particular drawing in the plane is not part of the structure of the diagram, but the total orderings on input and output wires of each box as well as on input and output interfaces are.}
	\label{cd_string_hypergraph}
\end{figure}
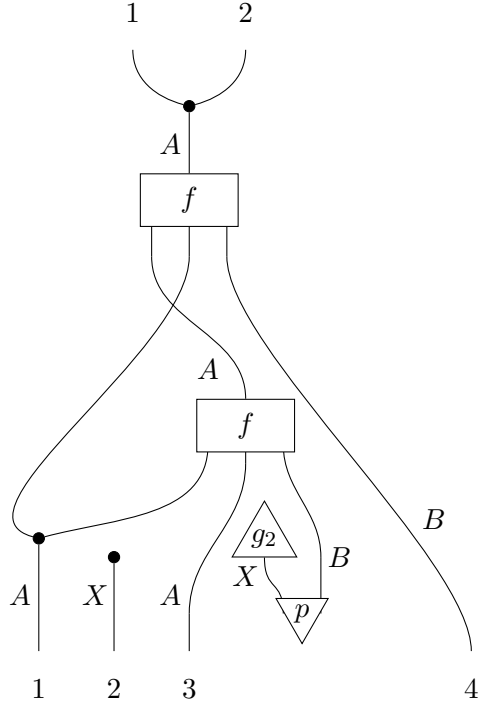

To construct the free gs-monoidal category generated by a monoidal signature $\Sigma$, we consider the slice category $\Hyp / \Sigma$, or rather its full subcategory $\FinHyp / \Sigma$ (though $\Sigma$ itself does not need to be finite). Its objects are hypergraph morphisms $G \to \Sigma$ for finite $G$, where we think of as such a morphism as a labelling of wires of $G$ by wires of $\Sigma$ and of boxes of $G$ by by boxes of $\Sigma$ of the same arity and coarity.
Morphisms are morphisms of hypergraphs that preserve the labeling.

In the following, we also write $n$ as shorthand for the set $\{1,\ldots,n\}$, and use underline notation $\underline{n}$ for the discrete hypergraph with set of wires $n$.

\begin{definition}
	\label{cd_diagram}
	Let $\Sigma$ be a hypergraph. Then a \newterm{gs-monoidal string diagram} for the monoidal signature $\Sigma$ is a cospan in $\FinHyp / \Sigma$ of the form
	\[\begin{tikzcd}
		& G \\
		\underline{m} \ar{ur}{p} & & \underline{n} \ar{ul}[swap]{q}
	\end{tikzcd}\]
	for an acyclic finite hypergraph $G$ and $m, n \in \N$, such that \newterm{left monogamy} holds: for every wire $w \in W(G)$, we have
	\[
		|p^{-1}(w)| + \sum_{b \in B(G)} \Iout(b,w) = 1.
	\]
	We also keep mention of the labeling morphisms $\sigma : G \to \Sigma$ implicit, which uniquely restrict to labeling morphisms $\sigma : \underline{m}, \underline{n} \to \Sigma$.
\end{definition}

The left leg of such a left monogamous cospan is a monomorphism.
The idea of left monogamy is borrowed from~\cite{bonchi2016rewriting}, where essentially the same condition was used for both legs as part of a combinatorial formalization of string diagrams in hypergraph categories.

The main point is now that a string diagram in a gs-monoidal category---or rather an equivalence class of string diagrams modulo the gs-monoidal category axioms---is precisely a gs-monoidal string diagram in the above sense.
While this idea is formalized by the universal property that we will prove in the next section, we illustrate it here intuitively with the example of \Cref{cd_string_hypergraph}.
There are $8$ connected components of wiring, which indicates that the hypergraph $G$ has $8$ wires, equipped with labels in $W(\Sigma)$.
There are $5$ boxes, indicating that $G$ has $5$ boxes, equipped with labels in $B(\Sigma)$.
The incidences between the wires and the boxes depict the hypergraph structure; the fact that the labeling $G \to \Sigma$ must be a hypergraph morphism ensures that the input and output wires of each box match the ones required by $\Sigma$ both in number and in their sequences of labels.
The acyclicity holds since all wires are consistently directed upward, although $\Sigma$ itself (\Cref{monoidal_sig}) need not be acyclic.
The \newterm{input interfaces} $1$ to $4$ at the bottom describe the left leg of the cospan: each interface connects to a wire in $G$ as determined by $p$.
Similarly, $q$ specifies how the \newterm{output interfaces} are connected to wires.
The left monogamy condition amounts to the fact that every component of wiring is either an output of a box (and in a unique way), or connects with a unique input interface, but not both.
\Cref{noncd_string_hypergraph} displays examples of cospans which violate the acyclicity or the left monogamy condition, respectively; these can still be interpreted as string diagrams in the free hypergraph category generated by $\Sigma$~\cite{zanasi2017rewriting}, but not as string diagrams in a gs-monoidal category.

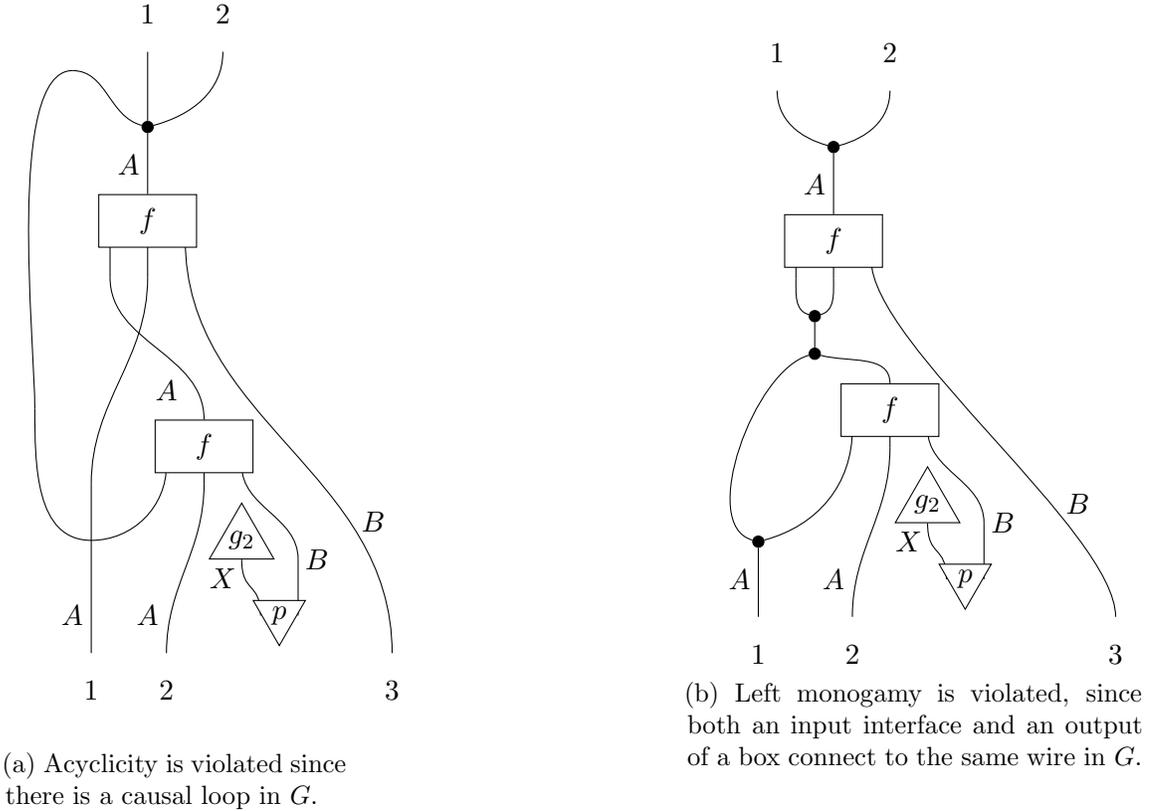
\begin{figure}
	\centering
	\begin{subfigure}{0.3\textwidth}
		\centering
		\begin{tikzpicture}
	\begin{pgfonlayer}{nodelayer}
		\node [style=none] (0) at (0.25, -1.5) {};
		\node [style=none] (1) at (-0.5, 1) {};
		\node [style=none] (2) at (-1, -3.5) {$A$};
		\node [style=none] (3) at (0.75, -1.75) {};
		\node [style=none] (4) at (0, -3.5) {$A$};
		\node [style=none] (6) at (1.25, -1.5) {};
		\node [style=medium box] (7) at (0.75, -1.25) {$f$};
		\node [style=none] (8) at (0.25, -1.5) {};
		\node [style=none] (9) at (0.75, -1.5) {};
		\node [style=none] (10) at (1.25, -1.5) {};
		\node [style=bn] (21) at (0, 3) {};
		\node [style=none] (22) at (-0.25, 2.5) {$A$};
		\node [style=none] (23) at (0, 1) {};
		\node [style=medium box] (26) at (0, 1.75) {$f$};
		\node [style=none] (27) at (-0.5, 1.5) {};
		\node [style=none] (28) at (0, 1.5) {};
		\node [style=none] (29) at (0.5, 1.5) {};
		\node [style=none] (37) at (0.25, -4) {};
		\node [style=none] (38) at (-0.75, -1.75) {};
		\node [style=none] (40) at (0, 4) {};
		\node [style=none] (41) at (1, 4) {};
		\node [style=none] (43) at (0.25, -0.5) {$A$};
		\node [style=none] (45) at (-0.75, -4.5) {$1$};
		\node [style=none] (46) at (0.25, -4.5) {$2$};
		\node [style=none] (48) at (0, 4.5) {$1$};
		\node [style=none] (49) at (1, 4.5) {$2$};
		\node [style=none] (50) at (-1, 3.75) {};
		\node [style=none] (51) at (-1.5, -0.75) {};
		\node [style=none] (52) at (-0.75, -2.5) {};
		\node [style=none] (53) at (-0.75, -4) {};
		\node [style=none] (54) at (3.25, -4.5) {$3$};
		\node [style=none] (55) at (3.25, -4) {};
		\node [style=none] (56) at (3, -2.25) {$B$};
		\node [style=none] (57) at (2.25, -2.75) {$B$};
		\node [style=state] (58) at (1.75, -3.5) {$p$};
		\node [style=none] (60) at (2, -2.75) {};
		\node [style=none] (61) at (1.5, -3.5) {};
		\node [style=none] (62) at (2, -3.5) {};
		\node [style=none] (63) at (1, -3) {$X$};
		\node [style=costate] (64) at (1.25, -2.5) {$g_2$};
	\end{pgfonlayer}
	\begin{pgfonlayer}{edgelayer}
		\draw [in=-90, out=90] (7) to (1.center);
		\draw (8.center) to (0.center);
		\draw (9.center) to (3.center);
		\draw (10.center) to (6.center);
		\draw (26) to (21);
		\draw (28.center) to (23.center);
		\draw [in=90, out=-90] (3.center) to (37.center);
		\draw [in=90, out=-90, looseness=0.75] (40.center) to (21);
		\draw [in=-90, out=15] (21) to (41.center);
		\draw (1.center) to (27.center);
		\draw [in=90, out=-90] (23.center) to (38.center);
		\draw [in=180, out=90, looseness=0.50] (51.center) to (50.center);
		\draw [in=165, out=0] (50.center) to (21);
		\draw [in=-90, out=0] (52.center) to (8.center);
		\draw [in=-180, out=-90] (51.center) to (52.center);
		\draw (38.center) to (53.center);
		\draw [in=90, out=-90] (29.center) to (55.center);
		\draw [in=-90, out=90, looseness=1.25] (62.center) to (60.center);
		\draw [in=90, out=-90, looseness=1.50] (64) to (61.center);
		\draw [in=-90, out=90] (60.center) to (10.center);
	\end{pgfonlayer}
\end{tikzpicture}
		\caption{Acyclicity is violated since there is a causal loop in $G$.}
	\end{subfigure}
	\hfill
	\begin{subfigure}{0.4\textwidth}
		\centering
		\begin{tikzpicture}
	\begin{pgfonlayer}{nodelayer}
		\node [style=none] (0) at (0.25, -1.25) {};
		\node [style=none] (1) at (-0.5, 0.75) {};
		\node [style=none] (2) at (-1.25, -3.25) {$A$};
		\node [style=none] (3) at (0.75, -1.5) {};
		\node [style=none] (4) at (0, -3.25) {$A$};
		\node [style=none] (6) at (1.25, -1.25) {};
		\node [style=medium box] (7) at (0.75, -1) {$f$};
		\node [style=none] (8) at (0.25, -1.25) {};
		\node [style=none] (9) at (0.75, -1.25) {};
		\node [style=none] (10) at (1.25, -1.25) {};
		\node [style=bn] (16) at (0, 2.5) {};
		\node [style=none] (17) at (-0.25, 2) {$A$};
		\node [style=none] (18) at (0, 0.75) {};
		\node [style=medium box] (20) at (0, 1.25) {$f$};
		\node [style=none] (21) at (-0.5, 1) {};
		\node [style=none] (22) at (0, 1) {};
		\node [style=none] (23) at (0.5, 1) {};
		\node [style=none] (24) at (0.25, -3.75) {};
		\node [style=bn] (25) at (-1, -2.75) {};
		\node [style=none] (26) at (-0.75, 3.25) {};
		\node [style=none] (27) at (0.75, 3.25) {};
		\node [style=none] (31) at (0.25, -4.25) {$2$};
		\node [style=none] (32) at (-0.75, 3.75) {$1$};
		\node [style=none] (33) at (0.75, 3.75) {$2$};
		\node [style=none] (39) at (-1, -4.25) {$1$};
		\node [style=none] (40) at (-1, -3.75) {};
		\node [style=none] (41) at (2.25, -2.5) {$B$};
		\node [style=state] (42) at (1.75, -3.25) {$p$};
		\node [style=none] (44) at (2, -2.5) {};
		\node [style=none] (45) at (1.5, -3.25) {};
		\node [style=none] (46) at (2, -3.25) {};
		\node [style=none] (47) at (1, -2.75) {$X$};
		\node [style=costate] (48) at (1.25, -2.25) {$g_2$};
		\node [style=none] (49) at (3.75, -4.25) {$3$};
		\node [style=none] (50) at (3.75, -3.75) {};
		\node [style=none] (51) at (3.25, -2.25) {$B$};
		\node [style=bn] (52) at (-0.25, -0.25) {};
		\node [style=bn] (53) at (-0.25, 0.25) {};
	\end{pgfonlayer}
	\begin{pgfonlayer}{edgelayer}
		\draw (8.center) to (0.center);
		\draw (9.center) to (3.center);
		\draw (10.center) to (6.center);
		\draw (20) to (16);
		\draw (22.center) to (18.center);
		\draw [in=90, out=-90] (3.center) to (24.center);
		\draw [in=165, out=-90] (26.center) to (16);
		\draw [in=-90, out=15] (16) to (27.center);
		\draw (1.center) to (21.center);
		\draw [in=90, out=-90, looseness=0.75] (25) to (40.center);
		\draw [in=-90, out=90, looseness=1.25] (46.center) to (44.center);
		\draw [in=90, out=-90, looseness=1.50] (48) to (45.center);
		\draw [in=90, out=-90] (10.center) to (44.center);
		\draw [in=-90, out=90, looseness=0.50] (50.center) to (23.center);
		\draw [in=-165, out=165, looseness=0.75] (25) to (52);
		\draw [in=-90, out=15] (25) to (8.center);
		\draw [in=-15, out=90] (7) to (52);
		\draw [in=-90, out=165] (53) to (1.center);
		\draw [in=-90, out=15] (53) to (18.center);
		\draw (53) to (52);
	\end{pgfonlayer}
\end{tikzpicture}
		\caption{Left monogamy is violated, since both an input interface and an output of a box connect to the same wire in $G$.}
	\end{subfigure}
	\caption{Examples of cospans of hypergraphs over the $\Sigma$ of \Cref{monoidal_sig} where either of the two combinatorial conditions of \Cref{cd_diagram} is violated. They can still be interpreted as string diagrams in hypergraph categories~\cite{zanasi2017rewriting}, which is how we draw them.}
	\label{noncd_string_hypergraph}
\end{figure}

The fact that $\FinHyp$ is a full subcategory of the presheaf topos $\Hyp$ closed under finite limits and colimits shows that pushouts in $\FinHyp$ exist and that they can be computed as separate pushouts in $\Set$ on wires and boxes of every arity $(k,\ell)$.
Since the forgetful functor $\FinHyp / \Sigma \to \FinHyp$ creates (finite) colimits, the same statement applies to the slice category $\FinHyp / \Sigma$.

It follows that we can form the category $\cospan(\FinHyp / \Sigma)$, where morphisms are isomorphism classes of cospans and composition is by pushout. This pushout is computed on wires and boxes of every arity separately, and the labels in $\Sigma$ carry over automatically.

\begin{definition}
	\label{cat_cd_diagrams}
	Given a hypergraph $\Sigma$, the \newterm{category of gs-monoidal string diagrams} $\FreeGS_\Sigma$ is the subcategory of $\cospan(\FinHyp / \Sigma)$ where:
	\begin{itemize}
		\item Objects are the pairs $(n, \sigma : \underline{n} \to \Sigma)$ with $n \in \N$.
		\item Morphisms are the isomorphism classes of gs-monoidal string diagrams (\Cref{cd_diagram}).
	\end{itemize}
\end{definition}

\begin{remark}
	\label{termgraphs_cat}
	In the special case of a single sort, meaning that $|W(\Sigma)| = 1$, and when all boxes in $\Sigma$ have exactly one output wire, an equivalent category has also been constructed in~\cite{corradini1999gsmonoidal} as a category of \emph{term graphs}.
\end{remark}

By definition, the objects $\underline{n} \to \Sigma$ of $\FreeGS_\Sigma$ are in bijection with maps $n \to W(\Sigma)$, or equivalently with words over $W(\Sigma)$, as expected for the free gs-monoidal category: it has a monoid of objects given by the free monoid with generating set $W(\Sigma)$.

Thus the composition of two gs-monoidal string diagrams is defined up to isomorphism, and is represented by the gs-monoidal string diagram obtained by pushout composition.
Suppressing the (uniquely induced) labeling $\sigma$ as usual, this pushout composition amounts to the composite cospan
\begin{equation}
	\label{pushout_comp}
	\begin{tikzcd}
		&& {G+_{\underline{m}} H} \\
		& G && H \\
		\underline{\ell} && \underline{m} && \underline{n}
		\arrow["p", from=3-1, to=2-2]
		\arrow["q"', from=3-3, to=2-2]
		\arrow["r", from=3-3, to=2-4]
		\arrow["s"', from=3-5, to=2-4]
		\arrow["t", from=2-2, to=1-3]
		\arrow["u"', from=2-4, to=1-3]
	\end{tikzcd}
\end{equation}
In order for \Cref{cat_cd_diagrams} to make sense, we need to prove that this composite is indeed a gs-monoidal string diagram again.
This is guaranteed by the following result, which has been obtained independently as~\cite[Lemma~22 and Proposition~28]{milosavljevic2022rewriting}\footnote{Under the additional assumption of a single sort ($|W(\Sigma)| = 1$), which however is does not simplify the relevant arguments.}.

\begin{lemma}
	If both constituent cospans in \eqref{pushout_comp} are gs-monoidal string diagrams, then so is the composite.
\end{lemma}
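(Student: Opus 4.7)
The plan is to verify the three defining conditions of \Cref{cd_diagram} for the composite cospan: that $G +_{\underline{m}} H$ is finite, that it is acyclic, and that the composite cospan is left monogamous. Finiteness is immediate, since pushouts in $\FinHyp / \Sigma$ are computed componentwise in $\Set$ on wires and on boxes of every arity, and pushouts of finite sets are finite.

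For left monogamy, the wires of $G +_{\underline{m}} H$ are the equivalence classes $E$ on $W(G) \sqcup W(H)$ under the relation generated by $q(i) \sim r(i)$. Writing $E = E_G \sqcup E_H$ with $E_G \subseteq W(G)$ and $E_H \subseteq W(H)$, and summing the left monogamy equations for the two constituent cospans over $E_G$ and $E_H$ respectively, the left monogamy condition for the composite at $E$ reduces algebraically to the combinatorial identity
\[
	|E_G| + |E_H| - |r^{-1}(E_H)| = 1.
\]
The crucial input is that $r$, as the left leg of a left-monogamous cospan, is a monomorphism. For singleton equivalence classes lying in $W(G) \setminus \mathrm{Im}(q)$ or in $W(H) \setminus \mathrm{Im}(r)$, the identity is immediate. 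For a nontrivial ``crossing'' class (where $E_G$ and $E_H$ are both nonempty), injectivity of $r$ forces $E_H \subseteq \mathrm{Im}(r)$, so $|r^{-1}(E_H)| = |E_H|$ and the identity collapses to $|E_G| = 1$. This last equality follows from a simple-path argument in the bipartite graph on $E_G \sqcup E_H$ with edges indexed by the shared interface $\{i \in \underline{m} : q(i) \in E_G\}$: any simple path between two distinct $G$-vertices would begin with two different edges sharing an $E_H$-endpoint, contradicting injectivity of $r$.

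For acyclicity I argue by contradiction. Suppose $(E_1, b_1, \ldots, E_n, b_n, E_1)$ is a cycle in $G +_{\underline{m}} H$. The first step is to rule out any mixed transition $b_i \in B(H)$, $b_{i+1} \in B(G)$. Such a transition would force $E_{i+1, G}$ to contain the input of $b_{i+1}$, giving $|E_{i+1,G}| \geq 1$, while simultaneously $\sum_{b \in B(H)} \Iout(b, E_{i+1, H}) \geq 1$ from the output of $b_i$; but the identity $|E_{i+1, G}| + \sum_{b \in B(H)} \Iout(b, E_{i+1, H}) = 1$ established above then yields a contradiction. Since the sequence is cyclic, excluding $H \to G$ transitions forces all $b_i$ to lie in a single set, either entirely in $B(G)$ or entirely in $B(H)$. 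In either case the same identity, together with the fact that $b_i$ outputs into $E_{i+1}$, pins the relevant intersection $E_{i+1, G}$ or $E_{i+1, H}$ down to be a singleton, and this singleton must host both the output of $b_i$ and the input of $b_{i+1}$ in the corresponding hypergraph. This extracts a genuine cycle of $G$ or of $H$, contradicting their acyclicity.

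The main subtlety throughout is the left monogamy verification for crossing equivalence classes, where the asymmetry between the injective left leg and the possibly noninjective right leg of each cospan must be exploited carefully. This asymmetry is exactly what distinguishes the gs-monoidal setting from the hypergraph-category setting of \cite{bonchi2016rewriting,zanasi2017rewriting}, and explains why imposing monogamy on only the left leg is still enough for composition to be well-defined.
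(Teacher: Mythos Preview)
Your proof is correct and follows essentially the same architecture as the paper's: verify left monogamy by analyzing wires in the pushout, then verify acyclicity by a three-way case split on whether the boxes of a putative cycle lie in $B(G)$, in $B(H)$, or in both.

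The one noteworthy difference is in how the key structural fact is established. The paper observes that $t$ is a monomorphism because it is the pushout of the monomorphism $r$ in a topos, and then chases elements through the maps $t$ and $u$. You instead work directly with the equivalence-class description of the pushout, reduce left monogamy to the counting identity $|E_G| + |E_H| - |r^{-1}(E_H)| = 1$, and prove $|E_G| \le 1$ by an elementary bipartite-graph argument (each $H$-vertex has degree at most one since $r$ is injective, so a connected component can contain at most one $G$-vertex). This is more hands-on but avoids invoking the topos fact, and it has the pleasant side effect that the same identity immediately drives the acyclicity argument: your exclusion of an $H\to G$ transition and your singleton-pinning in the pure-$G$ and pure-$H$ cases are exactly the paper's three cases, recast through the identity. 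One small point you could make more explicit is that in the all-$H$ case the identity gives $|E_{i+1,G}| = 0$ rather than $|E_{i+1,H}| = 1$ directly; the latter then follows because a class disjoint from $W(G)$ is necessarily a singleton in $W(H)\setminus\mathrm{Im}(r)$.
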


Independently of the proof, it is worth noting that $B(G +_{\underline{m}} H)$ is the disjoint union of $B(G)$ and $B(H)$ since $B(\underline{m}) = \emptyset$.
Moreover, $W(G +_{\underline{m}} H)$ is the disjoint union of $W(G)$ and $W(H)$, with every wire in $H$ that is in the image of $r$ identified with its (unique) counterpart wire in $G$.
These statements reflect obvious properties of string diagram drawings.

\begin{proof}
	We show first that the composite satisfies left monogamy.
	To start, the morphism $t$ is a pushout of the monomorphism $r$ and therefore a monomorphism as well\footnote{This holds for example because $\Hyp / \Sigma$ is a topos.}.
	In particular, the composite leg $tp$ is a monomorphism too.
	Given a wire $w \in W(G +_{\underline{m}} H)$, we prove that the left monogamy condition holds for $w$ by a case distinction:
	\begin{itemize}
		\item $w = t(w')$ for some $w' \in W(G)$.

			Then either $w'$ is a target of a box in $G$ in a unique way, or $w'$ is in the image of $p$, but not both.
			The left monogamy at $w$ follows in either case if we can show that $w$ is not a target of any box in the image of $u$.
			But this is indeed the case, since otherwise $w = u(\hat{w})$ for some $\hat{w} \in W(H)$.
			Since this makes $w$ a member of both the images of $t$ and $u$, it must actually come from $\underline{m}$.
			But then $\hat{w}$ is both an output of a box in $H$ and in the image of $r$, which contradicts the left monogamy of the second cospan.
		\item $w$ is not in the image of $t$.

			In this case, we must have $w = u(\hat{w})$ for some $\hat{w} \in W(H)$.
			The preimage $\hat{w}$ is unique and not in the image of $r$, since otherwise $w$ would also be in the image of $t$.
			Thus by left monogamy of the second cospan, $\hat{w}$ is a target of a box in $H$ in a unique way,
			and it follows that $w$ is a target of a box in the image of $u$ in a unique way.
	\end{itemize}

	Next, we show the acyclicity of the composite cospan.
	We prove that the existence of a cycle as in \Cref{acyclic} is impossible by another case distinction.
	\begin{itemize}
		\item If every $b_i$ is in the image of $t$, then this also applies to every $w_i$.
			This contradicts acyclicity of $G$, since taking preimages under $G$ produces a cycle there based on the fact that $t$ is a monomorphism.
		\item If no $b_i$ is in the image of $t$, then the boxes are all in the image of $u$. By left monogamy of the second cospan, all of the target wires of the corresponding boxes in $H$ are not in the image of $r$, and therefore they are the unique preimages under $u$ of all the target wires of the $b_i$ in $G +_{\underline{m}} H$.
			Hence we still obtain a cycle in $H$, contradicting the assumed acyclicity.
		\item If some $b_i$ are in the image of $t$ and some in the image of $u$, then there must be $i$ such that $b_i \in B(H)$ and $b_{i+1} \in B(G)$. But this not possible either, since by left monogamy of the second cospan, no target wire of $b_i$ comes from $\underline{m}$, which would be the only way in which it could also be an input wire of $b_{i+1}$.
			\qedhere
	\end{itemize}
\end{proof}

We have therefore defined the category $\FreeGS_\Sigma$, and we now explain how it becomes a gs-monoidal category.
To define the monoidal structure, note first that every $\underline{m + n}$ is a coproduct of $\underline{m}$ and $\underline{n}$ in $\FinHyp / \Sigma$.
Denoting the resulting copairing operation by angular brackets $\langle \cdot, \cdot\rangle$, the monoidal structure on objects is given by
\[
	(m, \sigma_1 : \underline{m} \to \Sigma) \otimes (n, \sigma_2 : \underline{n} \to \Sigma) \:\coloneqq\:
	(m + n, \langle \sigma_1, \sigma_2 \rangle : \underline{m + n} \to \Sigma),
\]
which under the canonical bijection between objects and words over $W(\Sigma)$ simply corresponds to the concatenation of words.

The monoidal structure on morphisms is given by
\begin{align}\begin{split}
	\label{parallel_comp}
	\left(
		\begin{tikzcd}[ampersand replacement=\&]
			\& G \\
			\underline{k} \ar{ur}{p} \& \& \underline{\ell} \ar{ul}[swap]{q}
		\end{tikzcd}
	\right) \otimes & \left(
		\begin{tikzcd}[ampersand replacement=\&]
			\& H \\
			\underline{m} \ar{ur}{r} \& \& \underline{n} \ar{ul}[swap]{s}
		\end{tikzcd}
	\right) \\[4pt]
	& \coloneqq \; \left(
		\begin{tikzcd}[ampersand replacement=\&]
			\& G + H \\
			\underline{k + m} \ar{ur}{p + r} \& \& \underline{\ell + n} \ar{ul}[swap]{q + s}
		\end{tikzcd}
	\right)
\end{split}\end{align}
again leaving it understood that the labels in $\Sigma$ are induced in the obvious way.
It is clear that the isomorphism class of the resulting cospan on the right only depends on the isomorphism classes of the factors, and its left monogamy and acyclicity follow directly from the definitions.
Moreover, the bifunctoriality of this tensor product follows straightforwardly from the fact that a coproduct of two pushout squares is a pushout square as well.
Since the tensor product is clearly associative, we therefore have equipped $\FreeGS_\Sigma$ with the structure of a strict monoidal category, where the monoidal unit is $\underline{0}$.

With symmetry isomorphisms given by
\[
	\begin{tikzcd}
		& \underline{m + n} \\
		\underline{m + n} \ar{ur}{\cong} & & \underline{n + m} \ar{ul}[swap]{\cong}
	\end{tikzcd}
\]
where the two legs are the obvious isomorphisms and the labeling is again left implicit, it is straightforward to see that $\FreeGS_\Sigma$ is strict symmetric monoidal.

To finally make $\FreeGS_\Sigma$ into a gs-monoidal category, it remains to equip every object $(n, f : \underline{n} \to \Sigma)$ with the structure of a comutative comonoid satisfying the relevant multiplicativity properties.
Again leaving the labeling $f$ implicit, the copy and discard maps are represented by the cospans
\begin{equation}
	\label{freecd_cd}
	\begin{tikzcd}
		& \underline{n} \\
		\underline{n} \ar{ur}{\id} & & \underline{n + n} \ar{ul}[swap]{\langle \id, \id \rangle}
	\end{tikzcd}
	\qquad \qquad
	\begin{tikzcd}
		& \underline{n} \\
		\underline{n} \ar{ur}{\id} & & \underline{0} \ar{ul}[swap]{\exists !}
	\end{tikzcd}
\end{equation}
We leave it to the reader to verify that the properties required of a gs-monoidal category indeed hold.

\begin{example}
	\label{degenerate}
	If the monoidal signature $\Sigma$ has no boxes, then a gs-monoidal string diagram degenerates to a cospan of finite sets in which the left leg is a bijection (by left monogamy), and therefore without loss of generality an identity morphism.
	In this way, $\FreeGS_\Sigma$ degenerates to a gs-monoidal category equivalent to $(\FinSet / W(\Sigma))^\op$, which is cartesian monoidal.
	This matches up with the intuition that string diagrams in this case consist of pure wirings, and the pure wirings are in bijection with all the ways of mapping output interfaces to input interfaces such that the types are preserved.
\end{example}

\begin{remark}
	\label{unit}
	For every generating box $b \in B_{k,\ell}(\Sigma)$, there is a hypergraph $\overline{b}$ containing just $b$, by which we mean the hypergraph with
	\[
		B(\overline{b}) = \{b\}, \qquad W(\overline{b}) = \{\Iin_1, \ldots, \Iin_k, \Iout_1, \ldots, \Iout_\ell\}.
	\]
	This hypergraph fits into a cospan
	\[
		\langle b \rangle \coloneqq
		\begin{tikzcd}
			& \overline{b} \\
			\underline{k} \ar{ur} & & \underline{\ell} \ar{ul}
		\end{tikzcd}
	\]
	where the two legs are simply the inclusions of the inputs and outputs of $b$ in their original order, respectively.
	When labeled in the obvious way, this cospan is the combinatorial representation of $b$ drawn as a string diagram as in \Cref{monoidal_sig}.

	In this way, we obtain a canonical hypergraph morphism $\Sigma \to \hyp(\FreeGS_\Sigma)$ mapping every generating wire to itself and every box $b$ to the associated gs-monoidal string diagram $\langle b \rangle$.
\end{remark}

\section{The universal property}
\label{univ_prop}

In this section, we make precise and then prove the claim that $\FreeGS_\Sigma$ is the free gs-monoidal category generated by $\Sigma$.
A similar result was proven by Corradini and Gadducci~\cite[Theorem~23]{corradini1999gsmonoidal}.
Ours improves on theirs in several ways:

\begin{itemize}
	\item We state and prove the ``correct'' $2$-categorical universal property in addition to the $1$-categorical one.
		This closely follows the ideas of Walters~\cite{walters1989free}, who had developed a $2$-categorical universal property for free categories with finite products and anticipated that this would apply very similarly to other categories with extra structure.
	\item Our result applies for monoidal signatures with arbitrary (finite) arity and coarity and with arbitrarily many sorts. This possibility had already been anticipated in~\cite[Section~6]{corradini1999gsmonoidal}.
	\item We do not assume that the target gs-monoidal categories are strict. Although assuming strictness is possible without loss of generality by strictification, it should be noted that the standard strictification theorem for symmetric monoidal categories is not sufficient, since it does not account for the gs-monoidal structure. Fortunately, a more general strictification theorem for monoidal categories with extra structure is available and applicable~\cite[Corollary~4.10]{fongspivak2019bells}.
		This indeed implies that every gs-monoidal category is equivalent to a strict one, in the sense of equivalence in the $2$-category $\gsCat$.
	\item There is a subtle gap in the proof~\cite{corradini1999gsmonoidal}. In our notation, the unproven statement is the assertion that if $|W(\Sigma)| = 1$, then the canonical functor $\FinSet \to \FreeGS_\Sigma$, which throws in the generating boxes in addition to the generating wire, is faithful~\cite[top of p.~315]{corradini1999gsmonoidal}. This is not obvious: there are examples of multi-sorted algebraic theories for which freely adding extra generators to a free model collapses the model~\cite{rezk2017multisorted}.
\end{itemize}

To begin, recall that in \Cref{internal_hyp} we associated to every gs-monoidal category $\cC$ a category $\hyp(\cC)$ internal to $\Hyp$, and this construction is a $2$-functor $\hyp : \gsCat \to \CatHyp$.
We now also consider a monoidal signature $\Sigma$ as a category internal to $\Hyp$, namely with the hypergraph of morphisms being the empty one. Like this, the canonical hypergraph morphism $\Sigma \to \hyp(\FreeGS_\Sigma)$ from \Cref{unit} is an internal functor with the trivial action on morphisms.

We can now state and prove the universal property.

\begin{theorem}
	\label{cdterm_free}
	Let $\Sigma$ be a monoidal signature. Then composing with the canonical hypergraph morphism $\Sigma \to \hyp(\FreeGS_\Sigma)$ induces an equivalence
	\[
		\gsCat(\FreeGS_\Sigma, \cC) \: \cong \: \CatHyp(\Sigma, \hyp(\cC))
	\]
	for every gs-monoidal category $\cC$.
\end{theorem}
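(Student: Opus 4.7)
\medskip

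The plan is to construct an explicit pseudoinverse functor $\Phi \colon \CatHyp(\Sigma, \hyp(\cC)) \to \gsCat(\FreeGS_\Sigma, \cC)$ and then verify that the given composition and $\Phi$ form an equivalence. Given an internal functor $F \colon \Sigma \to \hyp(\cC)$, define $\hat F \coloneqq \Phi(F) \colon \FreeGS_\Sigma \to \cC$ on objects by sending $(n, \sigma \colon \underline{n} \to \Sigma)$ to the tensor product $F(\sigma(1)) \otimes \cdots \otimes F(\sigma(n))$, parenthesized in some fixed convention. On a gs-monoidal string diagram $\underline m \xrightarrow{p} G \xleftarrow{q} \underline n$, the acyclicity of $G$ lets us pick a topological ordering of $B(G)$, and left monogamy together with acyclicity guarantees that every non-interface wire is the output of a unique box. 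We then build a morphism in $\cC$ by traversing the boxes in this order: each box $b$ is interpreted as $F(b)$; each wire that is an output of some box gets "duplicated and routed" to its consumers via iterated $\copi$'s; each wire appearing in no box with a non-zero index on $p^{-1}$ corresponds to an input interface; and wires feeding no box further and not in the image of $q$ get $\del$'d. The left monogamy condition ensures that on each wire exactly one of these cases occurs, so the construction is unambiguous once the topological ordering and the coherence convention are fixed.

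The main technical work is to check that $\hat F$ is well defined on isomorphism classes of cospans and independent of the topological ordering, and that it assembles into a strong gs-monoidal functor. Independence of the topological ordering reduces to the symmetric monoidal structure of $\cC$ (naturality of symmetries and the coassociativity and cocommutativity of $\copi$); independence on isomorphism class of $G$ is immediate because two isomorphic cospans differ only by relabeling of internal wires and boxes that have no effect on the construction. Functoriality $\hat F(g \circ f) = \hat F(g) \circ \hat F(f)$ follows from the pushout description of composition in $\FreeGS_\Sigma$: a topological ordering of $B(G +_{\underline m} H)$ can be chosen to be a concatenation of orderings of $B(G)$ and $B(H)$, and the wires identified under the pushout correspond exactly to the interface wires at which the construction in $\cC$ composes the two resulting morphisms. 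The strong gs-monoidal coherences of $\hat F$ are tautologies, because the tensor product of objects in $\FreeGS_\Sigma$ is strict and the image in $\cC$ is defined directly as a tensor product of generator images, so the laxators are identities up to the fixed coherence convention; compatibility with $\copi$ and $\del$ is visible directly from \eqref{freecd_cd}.

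For the 2-cells, given an internal natural transformation $\alpha \colon F \Rightarrow G$ between functors $\Sigma \to \hyp(\cC)$, its components are families of morphisms in $\cC$ assigned to each wire of $\Sigma$, satisfying naturality on each box. Define the candidate monoidal natural transformation $\hat\alpha \colon \hat F \Rightarrow \hat G$ with components $\hat\alpha_{(n,\sigma)} \coloneqq \alpha_{\sigma(1)} \otimes \cdots \otimes \alpha_{\sigma(n)}$. Naturality of $\hat\alpha$ on a string diagram follows by an induction along a topological ordering using the naturality of $\alpha$ on each generating box together with the naturality of $\copi$ and $\del$ in $\cC$ (which holds for $\copi$ by the gs-monoidal axioms, and for $\del$ trivially because of the interface structure). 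Conversely, any monoidal natural transformation $\hat F \Rightarrow \hat G$ is determined by its components on the generating objects by the universal property of tensor products, so this assignment is fully faithful on 2-cells. Essential surjectivity on 1-cells is obtained by restricting a strong gs-monoidal functor $H \colon \FreeGS_\Sigma \to \cC$ to generators via the canonical morphism $\Sigma \to \hyp(\FreeGS_\Sigma)$, applying $\hyp(H)$, and building an invertible gs-monoidal transformation $H \cong \widehat{\hyp(H) \circ \mathrm{unit}}$ from the coherence isomorphisms of $H$.

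The main obstacle I expect is the rigorous bookkeeping of the induction on a topological ordering: making the construction of $\hat F$ on morphisms precise enough that both well-definedness (independence of chosen ordering and of cospan representative) and functoriality under pushout composition can be checked cleanly. This is essentially a coherence argument for gs-monoidal categories, and it is here that a preparatory strictification, for instance via~\cite[Corollary~4.10]{fongspivak2019bells}, makes the verification substantially cleaner by removing associator and unitor clutter. Once the strict case is in hand, the statement for arbitrary $\cC$ follows by transporting along a strictifying equivalence in $\gsCat$, which is precisely why we allow strong, not merely strict, gs-monoidal functors on the left-hand side.
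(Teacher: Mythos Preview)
Your overall strategy matches the paper's: reduce to strict $\cC$ via strictification, show essential surjectivity by extending a hypergraph morphism to a strict gs-monoidal functor, and handle 2-cells separately. The difference is in the decomposition used to define $\hat F$ on morphisms. Rather than a topological ordering of boxes, the paper introduces a complexity measure $C(\alpha)$ and proves that every non-permutation diagram has a ``piece''---a final discard, a final copy, or a final box---whose removal strictly lowers $C$; the value $F(\alpha)$ is then defined by induction on $C$, stripping off one piece at a time. This treats copies and discards on the same footing as boxes, so the well-definedness check reduces cleanly to the interchange law and coassociativity in $\cC$. Your topological-ordering approach can in principle be made to work, but you would still have to specify precisely how the copy/discard routing interleaves with the box traversal, which is exactly the bookkeeping you flag as the main obstacle; the paper's piece decomposition sidesteps this.

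There is a genuine error in your treatment of 2-cells. You assert that naturality of $\hat\alpha$ with respect to copy ``holds for $\copi$ by the gs-monoidal axioms.'' This is false: the gs-monoidal axioms do \emph{not} make $\copi$ natural---that would say every morphism $f$ satisfies $(f \otimes f) \circ \copi = \copi \circ f$, i.e.\ is copyable, which already fails in $\FinStoch$. For $\hat\alpha$ to be natural at the morphism $\copi_w$ of $\FreeGS_\Sigma$ you need each component $\alpha_w$ to be a comonoid homomorphism, namely $\copi_{G(w)} \circ \alpha_w = (\alpha_w \otimes \alpha_w) \circ \copi_{F(w)}$ and $\del_{G(w)} \circ \alpha_w = \del_{F(w)}$, and nothing in the data of an internal natural transformation in $\CatHyp(\Sigma,\hyp(\cC))$ enforces this. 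So your argument that the restriction functor is full on 2-cells does not go through as written. (The paper's own proof is terse at exactly this point as well, so this step deserves scrutiny regardless.)
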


Most of the remainder of this section is dedicated to the proof.

\begin{example}
	\label{free_monoids}
	Continuing on from \Cref{degenerate}, suppose that $\Sigma$ is the trivial hypergraph consisting of a single wire and no box. Then $\FreeGS_\Sigma$ is equivalent to $\FinSet^\op$, and \Cref{cdterm_free} specializes to the well-known fact that $\FinSet$ is equivalent to the PROP for commutative monoids, a well-known result~\cite{lafont2003algebraic}.
\end{example}

\begin{remark}
	The following proof of \Cref{cdterm_free} is somewhat long-winded, involving a number of induction arguments and case distinctions.
	This is not surprising, since already the degenerate case mentioned in the previous example is nontrivial.

	We had considered an alternative proof involving a reduction to the characterization of free hypergraph categories in~\cite{zanasi2017rewriting}.
	While such an approach indeed seems to be feasible, amounting to an embedding of the free gs-monoidal category $\FreeGS_\Sigma$ generated by a monoidal signature $\Sigma$ into the free hypergraph category generated by $\Sigma$, we have ultimately decided against this method.\footnote{The main reason is that Zanasi's proof has two gaps: first, no argument for the functoriality of Zanasi's purported functor $\gamma$ \cite[Theorem~3.2]{zanasi2017rewriting} is given; second, in contrast to what is stated, the result on amalgamations of categories referenced in the proof of \cite[Corollary~3.4]{zanasi2017rewriting} does not transfer to amalgamations of symmetric monoidal categories in any obvious way due to the interchange law imposing additional equations between morphisms.}
\end{remark}

We now start the proof, showing that the functor from left to right given by the restriction is fully faithful and essentially surjective.

For full faithfulness, suppose that we are given two strong gs-monoidal functors
\[
	F, G \: : \: \FreeGS_\Sigma \longrightarrow \cC,
\]
and consider their adjuncts $F^\flat, G^\flat : \Sigma \to \hyp(\cC)$, which are the composites
\[
	\begin{tikzcd}
		\Sigma \ar{r} & \FreeGS_\Sigma \ar{rr}{\hyp(F), \, \hyp(G)} & & \hyp(\cC).
	\end{tikzcd}
\]
Then we need to show that the gs-monoidal transformations $\alpha : F \Rightarrow G$ restrict bijectively to the internal natural transformations $\alpha^\flat : F^\flat \Rightarrow G^\flat$.
The monoidality property of $\alpha$ implies that it is uniquely determined by its values on generating objects, which shows of the map $\alpha \mapsto \alpha^\flat$.
Surjectivity holds for the same reason, together with the fact that the relevant naturality conditions are encoded in $\CatHyp$ at the level of boxes.

We address essential surjectivity over the course of the next few subsections.
We assume that $\cC$ is a strict gs-monoidal category, which is without loss of generality by strictification~\cite[Corollary~4.10]{fongspivak2019bells}.
It is now enough to extend a given hypergraph morphism $f : \Sigma \to \hyp(\cC)$ to a strict gs-monoidal functor $F : \FreeGS_\Sigma \to \cC$.
Since on objects we must necessarily have
\[
	F((\underline{n}, \sigma : \underline{n} \to \Sigma)) = \bigotimes_{i=1}^n f(\sigma(i))
\]
by the requirements of strict monoidality and restriction to $f$, it remains to construct $F$ on every morphism
\begin{equation}
	\label{gs-monoidalsd}
	\alpha \coloneqq
	\begin{tikzcd}
		& G \\
		\underline{m} \ar{ur}{p} & & \underline{n} \ar{ul}[swap]{q} 
	\end{tikzcd}
\end{equation}
We do so by induction on the \newterm{complexity} of $\alpha$, by which we mean the quantity
\[
	C(\alpha) \coloneqq |B(G)| + \sum_{w \in W(G)} \left| \sum_{b \in B(G)} \Iin(b,w) + |q^{-1}(w)| - 1 \right|.
\]
As will become clear in the course of the proof, the idea is that this counts the number of boxes plus the minimal number of occurrences of a copy or discard morphism.

\subsection*{Piece decompositions}

In order to arrive at a gs-monoidal string diagram of lower complexity, we need to factor $\alpha$ into morphisms of smaller complexity.
We will do so by stripping off a ``piece'', similarly as in~\cite[p.~307/8]{corradini1999gsmonoidal}.
Also the independent work~\cite[Section~2.3]{milosavljevic2022rewriting} develops another method for decomposing $\alpha$ into simpler parts.

\begin{definition}
	A \newterm{final wire} is $w \in W(G)$ that is not an input to any box,
	\[
		\Iin(b,w) = 0 \qquad \forall b \in B(G).
	\]
\end{definition}

The following more particular notions will be used to decompose any gs-monoidal string diagram into pieces by induction on the complexity.

\begin{definition}
	A \newterm{parallel wire} is a final wire $w$ with\footnote{By left monogamy, the first equation equivalently states that $w$ is not an output of any box.}
	\[
		|p^{-1}(w)| = 1 = |q^{-1}(w)|.
	\]
	Moreover, a \newterm{piece} of a gs-monoidal string diagram \eqref{gs-monoidalsd} is one of the following three things:
	\begin{enumerate}
		\item A \newterm{final discard} is a final wire $w$ with
			\[
				q^{-1}(w) = \emptyset.
			\]
		\item A \newterm{final copy} is a final wire $w$ together with distinct $i,j \in n$ such that
			\[
				q(i) = q(j) = w.
			\]
		\item A \newterm{final box} is a box $b \in B(G)$ such that each of its output wires $w \in \Iout(b)$ is final and satisfies $|q^{-1}(w)| = 1$.
	\end{enumerate}
\end{definition}

Intuitively, a final box is one whose outputs connect to output interfaces without further processing.
In particular, every box without any inputs or outputs is a final box.

\begin{lemma}
	\label{pieces_exist}
	Every non-permutation gs-monoidal string diagram has a piece.
\end{lemma}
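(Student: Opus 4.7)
The plan is to split into two cases based on whether $G$ has any boxes, with the non-permutation hypothesis being needed only in the discrete case.

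\textbf{Case 1: $B(G) = \emptyset$.} In this situation left monogamy forces $|p^{-1}(w)| = 1$ for every $w \in W(G)$, so $p$ is a bijection. A gs-monoidal string diagram of this shape represents a permutation precisely when $q$ is also a bijection, i.e.\ $|q^{-1}(w)| = 1$ for every wire. Since $\alpha$ is by assumption not a permutation, there must exist some $w \in W(G)$ with $|q^{-1}(w)| \neq 1$. Such a $w$ is final (vacuously, there are no boxes), and it is a final discard when $|q^{-1}(w)| = 0$, or a final copy when $|q^{-1}(w)| \geq 2$.

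\textbf{Case 2: $B(G) \neq \emptyset$.} Here the non-permutation hypothesis is not needed. We extract a \emph{maximal} box via acyclicity. Concretely, define a relation on $B(G)$ by setting $b \prec b'$ whenever there is a wire $w$ with $\Iout(b,w) \ge 1$ and $\Iin(b',w) \ge 1$; any directed sequence under $\prec$ yields a sequence of wires and boxes satisfying the hypotheses of \Cref{acyclic}, so acyclicity of $G$ prevents cycles in $\prec$. Combined with finiteness of $B(G)$, this yields a $\prec$-maximal box $b^*$. Maximality says that no output wire of $b^*$ is an input to any box, so every $w \in \Iout(b^*)$ is final.

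It remains to inspect how the outputs of $b^*$ hit the interface. If $b^*$ has no outputs, the final-box conditions hold vacuously and $b^*$ is itself a piece. Otherwise, for each output wire $w$ of $b^*$ we have three possibilities: (i) $|q^{-1}(w)| = 0$, giving a final discard; (ii) $|q^{-1}(w)| \ge 2$, giving a final copy; or (iii) $|q^{-1}(w)| = 1$. If (iii) holds uniformly over all outputs of $b^*$, then $b^*$ is a final box by definition; otherwise we have already produced a final discard or final copy. Either way $\alpha$ has a piece.

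The only nontrivial step is the extraction of the maximal box in Case 2, and this is exactly the place where \Cref{acyclic} is used — the cycle excluded there is precisely an infinite ascending chain under $\prec$ in a finite set, which is impossible. Everything else is a direct unpacking of the definitions of \emph{final wire}, \emph{final discard}, \emph{final copy}, and \emph{final box}.
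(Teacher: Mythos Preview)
Your proof is correct and uses essentially the same idea as the paper's: acyclicity plus finiteness of $B(G)$ guarantees a ``last'' box, which then yields a piece. The only difference is organizational---the paper argues by contradiction (assuming no pieces, every box has a successor, producing a cycle), whereas you argue directly by extracting a $\prec$-maximal box and case-splitting on whether $B(G)$ is empty; the underlying content is the same.
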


\begin{proof}
	Assume that neither of the three kinds of pieces exists in a gs-monoidal string diagram~\eqref{gs-monoidalsd}.
	Then every final wire $w$ satisfies $|q^{-1}(w)| = 1$, and by left monogamy is either an output of a box or a parallel wire.
	In particular, since by assumption there is at least one non-parallel wire, there must at least be one box.
	And since there is no final box, it follows that every box has an output wire that is not final.
	For every box $b$, we can therefore choose a ``successor'' box $b'$ defined as any box that has a non-final output wire of $b$ as an input.
	If we now start at an arbitrary box and consider its sequence of successors, by finiteness of $G$ we end up with a cycle that contradicts the acyclicity of $G$.
\end{proof}

The permutations (symmetry isomorphisms) in $\FreeGS_\Sigma$ are exactly those morphisms represented by cospans with no boxes and for which both legs are bijections on nodes. Equivalently, they are the morphisms of complexity zero.
On these, the value of $F$ is clear: since $F$ is required to be a symmetric monoidal functor, this value must be the corresponding permutation in $\cC$.

Assume now that $F$ has already been defined on all gs-monoidal string diagrams of lower complexity.
We then define it on the gs-monoidal string diagram under consideration $\alpha$ by stripping off a piece, applying $F$ to the remaining gs-monoidal string diagram of lower complexity $\alpha'$, and extending $F$ to $\alpha$ in the unique way.
Delegating the question of well-definedness to further down, we do this by distinguishing the type of piece.

\begin{enumerate}
	%
	\item If there is a final discard $w$ in $\alpha$, then up to permutation of output interfaces, $\alpha$ can be decomposed into the composite cospan of
		\begin{equation}
			\label{fd_decomp}
			\begin{tikzcd}
				& G & & \underline{n+1} \\
				\underline{m} \ar{ur}{p} & & \underline{n+1} \ar{ul}[swap]{q'} \ar{ur}{\id} & & \underline{n} \ar{ul}
			\end{tikzcd}
		\end{equation}
		Here, the unlabeled arrow is the trivial inclusion, and $q'$ is given by $q$ extended by $n + 1 \longmapsto w$.
		This amounts to the creation of a new output interface for the wire $w$ that was discarded in $\alpha$. 
		Then the composite gs-monoidal string diagram coincides with $\alpha$ by construction.
		With $\alpha'$ referring to the first cospan above (with the same labeling $\sigma$), we have $C(\alpha') = C(\alpha) - 1$, since the summand associated to $w$ is $0$ in $\alpha'$ and $1$ in $\alpha$.

		We now define
		\[
			F(\alpha) \coloneqq \left(\id \otimes \del_{f(\sigma(w))} \right) \circ F(\alpha').
		\]
		This is the only possibility (subject to our requirements) since the second gs-monoidal string diagram above is equal to $\id_{\underline{n}} \otimes \del_{\sigma(w)}$ in $\FreeGS_\Sigma$.
	\item The case of a final copy $w$ in $\alpha$ is similar. By permuting output interfaces we can assume that $q(n-1) = q(n) = w$.
		Then the gs-monoidal string diagram $\alpha$ decomposes as the composite
		\begin{equation}
			\label{fc_decomp}
			\begin{tikzcd}
				& G & & \underline{n-1} \\
				\underline{m} \ar{ur}{p} & & \underline{n-1} \ar{ul}[swap]{q'} \ar{ur}{\id} & & \underline{n} \ar{ul}
			\end{tikzcd}
		\end{equation}
		Here, the unlabeled arrow maps $n \longmapsto n-1$ and acts as the identity otherwise, and $q'$ is the restriction of $q$ to $\underline{n-1}$.
		That the composite recovers $\alpha$ and that the complexity of $\alpha'$ has decreased can be seen as in the previous case.

		We can therefore define
		\[
			F(\alpha) \coloneqq \left(\id \otimes \copi_{f(\sigma(w))} \right) \circ F(\alpha'),
		\]
		and this is the only possibility since the second gs-monoidal string diagram above decomposes as $\id_{\underline{n-2}} \otimes \copi_{\sigma(w)}$ in $\FreeGS_\Sigma$.
	\item If $\alpha$ has a final box $b$ of arity $(k,\ell)$, then by output permutations we can assume that the outputs of $b$ are exactly the wires connected to the output interfaces $n-\ell+1,\ldots,n$ in the same order.
		The gs-monoidal string diagram $\alpha$ now decomposes as
		\begin{equation}
			\label{fb_decomp}
			\begin{tikzcd}
				& G' & & \underline{n-\ell} + \overline{\sigma(b)} \\
				\underline{m} \ar{ur}{p} & & \underline{n-\ell+k} \ar{ul}[swap]{q'} \ar{ur} & & \underline{n} \ar{ul}
			\end{tikzcd}
		\end{equation}
		Here, the second cospan is such that it represents the gs-monoidal string diagram $\id_{\underline{n-\ell}} \otimes \langle \sigma(b) \rangle$, and $G'$ consists of $G$ with the box $b$ and the output wires of $b$ removed. The right leg $q'$ can be defined as $q$ on $\underline{n-\ell}$, and as mapping to the sequence of input wires\footnote{Note that these need not be all distinct.} of $b$ on the remaining output interfaces.
		The gs-monoidal string diagram $\alpha'$ represented by the left cospan has complexity $C(\alpha') = C(\alpha) - 1 - \ell < C(\alpha)$.

		We can now define
		\[
			F(\alpha) \coloneqq \left(\id \otimes f(\sigma(b)) \right) \circ F(\alpha'),
		\]
		and this is the only possibility due to the requirement that $F(\langle \sigma(b) \rangle) = f(\sigma(b))$.
\end{enumerate}

Stripping off pieces repeatedly will completely decompose $\alpha$ into pieces eventually, since the complexity decreases in every step.
Thus if one starts out by successively removing final discards, copies and boxes, then this will ultimately result in a permutation, on which $F$ is trivially defined.

\subsection*{Well-definedness}

Showing that $F$ is well-defined requires two steps.
First, we need to check that the above definitions of $F(\alpha)$ are independent of the particular permutations of inputs and outputs that have been used to arrive at the specified forms~\eqref{fd_decomp}--\eqref{fb_decomp} in each step. This follows upon proving in the same induction that $F$ satisfies functoriality whenever one of the morphisms involved is a permutation. This straightforwardly reduces in each case to precomposing the legs $p'$ and $q'$ by permutation on interfaces.\footnote{This is where the commutativity of the comonoid structures in $\cC$ enters (in the final copy case).} We leave the detailed arguments to the reader.

Second, it must be shown that the above definitions are independent of the choice of piece.
We thus assume that we are given two different pieces of $\alpha$, and argue that applying the above prescriptions results in the same morphism $F(\alpha)$.

After permutations of output interfaces, each case amounts to a factorization $\alpha = (\id \otimes \gamma) \circ \alpha'$, where $\gamma$ corresponds to either a single discard, copy or box.
Then the well-definedness proof reduces to the following two exclusive cases, also using induction on the complexity.

\begin{itemize}
	\item The output interfaces appearing in the two pieces are disjoint.

		Then both pieces can be stripped off at once.
		After another permutation of output interfaces, this results in a factorization
		\[
			\alpha = (\gamma_1 \otimes \id \otimes \gamma_2) \circ \alpha'',
		\]
		where $\gamma_1$ and $\gamma_2$ each are a single discard, copy or box, and such that the two sides of the resulting equation
		\[
			(\gamma_1 \otimes \id) \circ \left[ (\id \otimes \gamma_2) \circ \alpha'' \right] = (\id \otimes \gamma_2) \circ \left[ (\gamma_1 \otimes \id) \circ \alpha'' \right]
		\]
		are the two factorizations used in each definition of $F(\alpha)$. Since
		\begin{align*}
			F \left( (\gamma_2 \otimes \id) \circ \alpha'' \right) = (F(\gamma_2) \otimes \id) \circ F(\alpha'') \\
			F \left( (\id \otimes \gamma_2) \circ \alpha'' \right) = (\id \otimes F(\gamma_2)) \circ F(\alpha'')
		\end{align*}
		holds by the assumed well-definedness at lower complexity, the interchange law in $\cC$ proves that the two definitions of $F(\alpha)$ are equal as morphisms of $\cC$.
	\item Both pieces are final copies with overlapping output interfaces.

		Then we can assume without loss of generality that these output interfaces are exactly $n-2$ to $n$.
		It is straightforward to see that the well-definedness now holds because of the coassociativity of $\copi_{f(\sigma(w))}$ in $\cC$.
\end{itemize}

\subsection*{Functoriality}

We now prove the functoriality
\[
	F(\beta \circ \alpha) = F(\beta) \circ F(\alpha)	
\]
for any pair of composable gs-monoidal string diagrams $\alpha : \underline{\ell} \to \underline{m}$ and $\beta : \underline{m} \to \underline{n}$.
If $\beta$ is a permutation, then the claim was already established above, so suppose it is not.
Then by \Cref{pieces_exist} and the construction of $F$, there is a decomposition
\[
	\beta = \gamma \circ \beta'
\]
for some $\gamma$ such that
\[
	F(\gamma \circ \beta') = F(\gamma) \circ F(\beta'), \qquad F(\gamma \circ \beta' \circ \alpha) = F(\gamma) \circ F(\beta' \circ \alpha),
\]
and where $\beta'$ is of lower complexity than $\beta$.
This way, we can prove the functoriality by induction on the complexity of $\beta$: since $F(\beta' \circ \alpha) = F(\beta') \circ F(\alpha)$ by the induction assumption, we obtain
\[
	F(\beta \circ \alpha) = F(\gamma \circ \beta' \circ \alpha) = F(\gamma) \circ F(\beta') \circ F(\alpha) = F(\beta) \circ F(\alpha),
\]
which is the induction step.

\subsection*{Monoidality}

We now sketch the proof of the strict monoidality
\[
	F(\alpha \otimes \beta) = F(\alpha) \otimes F(\beta)
\]
for arbitrary gs-monoidal string diagrams $\alpha$ and $\beta$. By the interchange law in both the domain and codomain categories as well as by functoriality, it is enough to consider the case where $\alpha$ or $\beta$ is an identity morphism, so suppose $\beta = \id$.
In this case, the claim follows by another straightforward induction argument on the complexity of $\alpha$, doing the induction step separately for each type of piece.

\subsection*{Preservation of gs-monoidal structure}

It remains to prove that $F$ preserves the gs-monoidal structure. This holds on a copy morphism of type $\copi_{\underline{1}}$ by construction in the final copy piece case.
The general case then follows by the multiplicativity of $\copi$ across tensor products of objects.
The analogous argument for discard then establishes the claim.

\medskip

In conclusion, $F : \FreeGS_\Sigma \to \cC$ is a strict gs-monoidal functor extending the given hypergraph morphism $f : \Sigma \to \hyp(\cC)$.
This finishes the proof of \Cref{cdterm_free}.

\subsection*{$1$-Categorical universal property}

We also obtain two versions of a $1$-categorical universal property analogous to the ones considered in~\cite{corradini1999gsmonoidal}.
While the $2$-categorical universal property given in \Cref{cdterm_free} is arguably the most relevant and useful one in practice, the following $1$-categorical one has the advantage of being easier to understand.
It also characterizes $\FreeGS_\Sigma$ up to isomorphism (rather than merely up to equivalence).
In the following statement, we consider $\hyp(\cC)$ as a mere hypergraph again.

\begin{corollary}
	\label{cdterm_free1}
	Let $\cC$ be a strict gs-monoidal category whose monoid of objects is the free monoid with generating set $W(\Sigma)$.	
	Then restricting along the hypergraph morphism $\Sigma \to \hyp(\FreeGS_\Sigma)$ induces a bijection between:
	\begin{itemize}
		\item The  strict gs-monoidal functors $\FreeGS_\Sigma \to \cC$,
		\item The  hypergraph morphisms $\Sigma \to \hyp(\cC)$.
	\end{itemize}
	This bijection restricts to a bijection between:
	\begin{itemize}
		\item The identity-on-objects strict gs-monoidal functors $\FreeGS_\Sigma \to \cC$.
		\item The identity-on-wires hypergraph morphisms $\Sigma \to \hyp(\cC)$.
	\end{itemize}
\end{corollary}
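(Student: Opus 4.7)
The plan is to extract both parts of the corollary from the construction used in the essential surjectivity part of the proof of \Cref{cdterm_free}, exploiting the fact that that argument, after the standard strictification reduction, actually produces a \emph{strict} gs-monoidal functor. Since $\cC$ is assumed strict here, the strictification step is unnecessary and the construction applies directly: given a hypergraph morphism $f : \Sigma \to \hyp(\cC)$, it produces a strict gs-monoidal functor $F : \FreeGS_\Sigma \to \cC$ restricting to $f$, where on objects $F$ sends $(\underline{n}, \sigma)$ to the word $f(\sigma(1)) \otimes \cdots \otimes f(\sigma(n))$ (which makes sense since the monoid of objects of $\cC$ is free on $W(\Sigma)$). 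This settles existence.

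For uniqueness, I would show directly that a strict gs-monoidal functor $F : \FreeGS_\Sigma \to \cC$ is determined by its restriction to $\Sigma$. On objects, the restriction fixes $F$ on the generating wires, and strict monoidality (together with the fact that the monoid of objects of $\FreeGS_\Sigma$ is the free monoid on $W(\Sigma)$) uniquely extends this to all objects. On morphisms, the key fact is that the piece decomposition used in the proof of \Cref{cdterm_free} expresses every morphism of $\FreeGS_\Sigma$ as a composite of tensor products of: generating boxes $\langle b \rangle$, copies $\copi_{\underline{1}}$, discards $\del_{\underline{1}}$, and symmetries on generating objects. A strict gs-monoidal functor sends these to $f(b)$, $\copi_{F(\Iin)}$, $\del_{F(\Iin)}$, and the corresponding symmetries in $\cC$ respectively, so any two strict gs-monoidal extensions of $f$ must coincide.

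For the second bijection, note that a strict gs-monoidal functor $F$ is identity-on-objects if and only if it is identity on each generating object, which by construction of the extension happens if and only if $f$ is identity on wires. The main (minor) subtlety is verifying that the piece-based construction of $F$ really yields strict rather than merely strong monoidality; this is immediate because the definition of $F$ on objects is the componentwise tensor product and the inductive definition on morphisms only ever composes morphisms in $\cC$ without invoking coherence isomorphisms (which are identities in the strict $\cC$). No further argument is needed beyond pointing back to \Cref{cdterm_free} and its proof.
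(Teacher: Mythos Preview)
Your proposal is correct and takes essentially the same approach as the paper: the paper likewise observes that the corollary does not follow formally from \Cref{cdterm_free} (which only characterizes $\FreeGS_\Sigma$ up to equivalence), but instead follows from the \emph{proof} of that theorem, since the extension $F$ constructed there is strict and at each stage of the piece decomposition the value of $F(\alpha)$ was shown to be the only possibility. The paper treats the second bijection as an immediate special case of the first, which is exactly your observation that $F$ is identity-on-objects if and only if $f$ is identity-on-wires.
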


The second claim is an immediate special case of the first.
These statements do not follow directly from \Cref{cdterm_free}, because the latter characterizes $\FreeGS_\Sigma$ only up to equivalence.
However, it follows by the same proof, since the gs-monoidal functor $F$ constructed as the extension of the hypergraph morphism $f$ is strict, and our construction also demonstrates its uniqueness since the given definition of $F(\alpha)$ is the only possibility.

\section{The bloom-circuitry factorization}
\label{bloom_circuitry}

For any monoidal signature $\Sigma$, we show that the free gs-monoidal category $\FreeGS_\Sigma$ has a canonical factorization system.

\begin{definition}
	\label{defLCDRCD}
	A morphism in $\FreeGS_\Sigma$ represented by a gs-monoidal string diagram
	\begin{equation}
		\label{cd_string}
		\begin{tikzcd}
			& G \\
			\underline{m} \ar{ur}{p} & & \underline{n} \ar{ul}[swap]{q}
		\end{tikzcd}
	\end{equation}
	is:
	\begin{enumerate}
		\item \newterm{pure bloom} if the right leg $q$ is a bijection (on wires).
		\item \newterm{pure circuitry} if $G$ has no boxes.
	\end{enumerate}
\end{definition}

For example, the string diagram depicted in~\Cref{bloom_example} is a pure bloom. The point is that every wire becomes an overall output in exactly one way.\footnote{This nicely matches up with the traditional random variable setting of probability theory, and in particular with Bayesian networks: every random variable in a Bayesian network forms part of the overall joint distribution, and is therefore wired to exactly one output interface in our sense.} A pure circuity morphism in turn is a gs-monoidal string diagram that contains at most copies and discard operations. 

The ``bloom'' terminology is inspired by and generalizes the one of~\cite{fullwood2021loss}, where blooms of a particularly simple form were considered. Our notion is similar to a condition used at~\cite[Lemma~4.7]{corradini1999gsmonoidal}, which in our terminology amounts to surjectivity of $q$.

\begin{figure}
	\centering
	\begin{tikzpicture}
	\begin{pgfonlayer}{nodelayer}
		\node [style=none] (0) at (0, -1) {};
		\node [style=bn] (1) at (-1, 1.25) {};
		\node [style=none] (2) at (0.5, -1.25) {};
		\node [style=none] (3) at (-0.5, -3.25) {$A$};
		\node [style=none] (4) at (3.25, -0.75) {$B$};
		\node [style=none] (5) at (1, -1) {};
		\node [style=medium box] (6) at (0.5, -0.75) {$f$};
		\node [style=none] (7) at (0, -1) {};
		\node [style=none] (8) at (0.5, -1) {};
		\node [style=none] (9) at (1, -1) {};
		\node [style=state] (10) at (1.75, -3.5) {$p$};
		\node [style=bn] (11) at (1.25, -2.75) {};
		\node [style=bn] (12) at (2.25, -2.75) {};
		\node [style=none] (13) at (1.5, -3.25) {};
		\node [style=none] (14) at (2, -3.25) {};
		\node [style=none] (16) at (-0.5, 3) {$A$};
		\node [style=none] (17) at (-0.25, 1.5) {};
		\node [style=medium box] (18) at (-0.25, 2.25) {$f$};
		\node [style=none] (19) at (-0.75, 2) {};
		\node [style=none] (20) at (-0.25, 2) {};
		\node [style=none] (21) at (0.25, 1.5) {};
		\node [style=bn] (22) at (-0.25, -2.25) {};
		\node [style=none] (24) at (-0.25, 4.25) {};
		\node [style=none] (25) at (1, -3) {$X$};
		\node [style=none] (26) at (0, 0) {$A$};
		\node [style=none] (27) at (-0.25, -4.5) {$3$};
		\node [style=none] (28) at (-4.5, 4.75) {$1$};
		\node [style=none] (29) at (-3.5, 4.75) {$2$};
		\node [style=none] (30) at (-2.25, -4.5) {$1$};
		\node [style=none] (31) at (-2.25, -4) {};
		\node [style=none] (32) at (-2.5, -3.25) {$A$};
		\node [style=bn] (33) at (-2.25, -2.25) {};
		\node [style=none] (34) at (-1.25, -4.5) {$2$};
		\node [style=none] (35) at (-1.25, -4) {};
		\node [style=none] (36) at (3.25, -4.5) {$4$};
		\node [style=bn] (37) at (3.25, -2.25) {};
		\node [style=none] (38) at (3, -3.25) {$B$};
		\node [style=none] (39) at (-1.5, -3.25) {$X$};
		\node [style=costate] (41) at (0.75, -2.25) {$g_2$};
		\node [style=none] (42) at (0.25, 2) {};
		\node [style=none] (43) at (-0.25, -4) {};
		\node [style=none] (44) at (-4.5, 4.25) {};
		\node [style=none] (45) at (-3.5, 4.25) {};
		\node [style=none] (46) at (-1.5, 4.75) {$4$};
		\node [style=none] (47) at (-1.5, 4.25) {};
		\node [style=none] (48) at (-2.5, 4.75) {$3$};
		\node [style=none] (49) at (-2.5, 4.25) {};
		\node [style=none] (50) at (-0.25, 4.75) {$5$};
		\node [style=none] (51) at (4.75, 4.25) {};
		\node [style=none] (52) at (4.75, 4.75) {$8$};
		\node [style=none] (53) at (3.25, -4) {};
		\node [style=none] (54) at (2.75, 4.25) {};
		\node [style=none] (55) at (2.75, 4.75) {$6$};
		\node [style=none] (56) at (3.75, 4.25) {};
		\node [style=none] (57) at (3.75, 4.75) {$7$};
		\node [style=none] (58) at (2, -3.5) {};
		\node [style=none] (59) at (1.5, -3.5) {};
	\end{pgfonlayer}
	\begin{pgfonlayer}{edgelayer}
		\draw [in=-90, out=90] (6) to (1);
		\draw (7.center) to (0.center);
		\draw (8.center) to (2.center);
		\draw (9.center) to (5.center);
		\draw [in=-90, out=90, looseness=1.25] (14.center) to (12);
		\draw [in=90, out=-90, looseness=1.75] (11) to (13.center);
		\draw (20.center) to (17.center);
		\draw [in=15, out=-90] (2.center) to (22);
		\draw [in=-90, out=15] (1) to (19.center);
		\draw (33) to (31.center);
		\draw [in=-90, out=15] (33) to (7.center);
		\draw [in=-90, out=90, looseness=0.75] (33) to (17.center);
		\draw [in=-90, out=165, looseness=0.50] (37) to (21.center);
		\draw [in=-90, out=165] (12) to (9.center);
		\draw (42.center) to (21.center);
		\draw (43.center) to (22);
		\draw (24.center) to (18);
		\draw [in=-90, out=165, looseness=0.50] (33) to (44.center);
		\draw [in=90, out=-90] (45.center) to (35.center);
		\draw [in=165, out=-90, looseness=0.50] (47.center) to (1);
		\draw [in=165, out=-90, looseness=0.25] (49.center) to (22);
		\draw [in=15, out=-90, looseness=0.50] (51.center) to (37);
		\draw (37) to (53.center);
		\draw [in=-90, out=15, looseness=0.25] (12) to (56.center);
		\draw [in=-75, out=165] (11) to (41);
		\draw [in=-90, out=15, looseness=0.25] (11) to (54.center);
		\draw (59.center) to (13.center);
		\draw (14.center) to (58.center);
	\end{pgfonlayer}
\end{tikzpicture}
	\caption{A gs-monoidal string diagram that is a pure bloom since every wire becomes an output in exactly one way.
		The gs-monoidal string diagram of \Cref{cd_string_hypergraph} factors into this pure bloom followed by pure circuitry.}
	\label{bloom_example}
\end{figure}
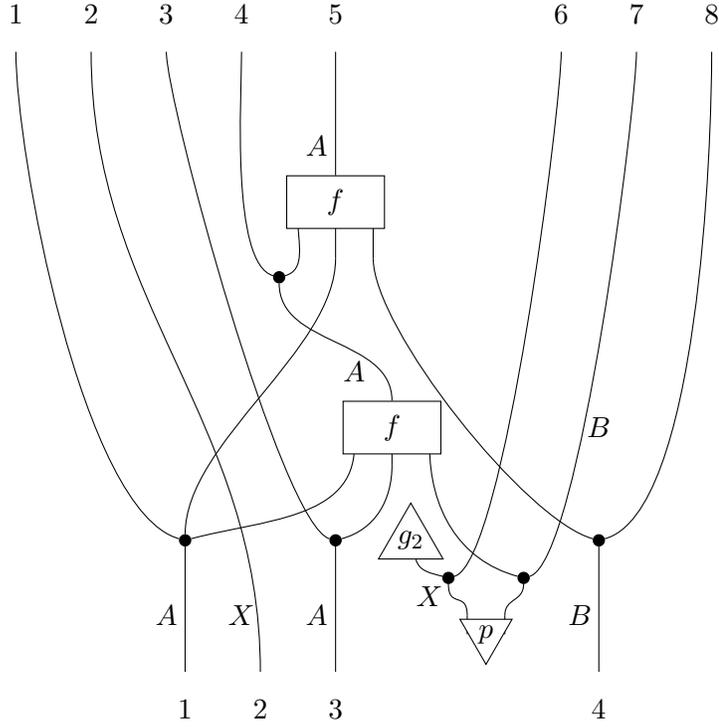

\begin{lemma}
	The pure blooms form a symmetric monoidal subcategory of $\FreeGS_\Sigma$.
\end{lemma}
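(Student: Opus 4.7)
The plan is to verify closure under the identities, the symmetry isomorphisms, the tensor product, and composition, in roughly that order of difficulty. Identities are represented by the cospan $\underline{n} \xrightarrow{\id} \underline{n} \xleftarrow{\id} \underline{n}$ with no boxes, and the symmetries by permutation cospans whose right leg is a bijection; both are manifestly pure blooms. For the tensor product of two pure blooms with right legs $q : \underline{\ell} \to G$ and $s : \underline{n} \to H$ as in~\eqref{parallel_comp}, the right leg of the composite cospan is $q + s : \underline{\ell + n} \to G + H$, which is a bijection on wires because coproducts of bijections in $\Set$ are bijections (and both $G$ and $H$ contribute their wire sets disjointly to $G + H$).

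The heart of the proof is closure under composition, which is where I expect any subtlety to live. Consider the pushout composition~\eqref{pushout_comp} of two pure blooms, so that $q$ and $s$ are bijections on wires. The right leg of the composite is $u \circ s$, so it suffices to show that $u$ is a bijection on wires. Since pushouts in $\FinHyp / \Sigma$ are computed separately on wires and on boxes of each arity, we may restrict attention to the wire-level pushout $W(G) \leftarrow m \rightarrow W(H)$ in $\Set$. Because $q : m \to W(G)$ is a bijection, the universal property (or direct verification using $H$ together with $s^{-1} \circ q^{-1}$ as a cocone) shows that the opposite leg $u_* : W(H) \to W(G +_{\underline{m}} H)$ is a bijection too. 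Composing with the bijection $s$ yields that $u \circ s$ is a bijection on wires, so the composite cospan is pure bloom.

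Finally, to conclude that we have a symmetric monoidal subcategory we also need to check that the associators and unitors (which are identities here, since $\FreeGS_\Sigma$ is strict symmetric monoidal) and the naturality squares do not take us outside of the pure blooms; this is immediate from the preceding cases together with the fact that pure blooms contain all permutation cospans. The main obstacle is really the pushout argument in the composition step, but it reduces to the standard fact that pushouts preserve isomorphisms, applied separately to the wire component.
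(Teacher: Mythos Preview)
Your proof is correct and follows the same route as the paper's: since pushouts in $\FinHyp/\Sigma$ are computed on wires as pushouts in $\Set$, the bijection $q$ forces $u$ to be a bijection, whence $u\circ s$ is too; the paper merely omits the routine checks (identities, symmetries, tensor) that you spell out. One small slip: in your parenthetical cocone you presumably mean $r\circ q^{-1}$ rather than $s^{-1}\circ q^{-1}$, but this does not affect the argument.
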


\begin{proof}
	In a sequential composition~\eqref{pushout_comp}, if $q$ is a bijection on wires then so is $u$ (since pushouts in $\FinHyp / \Sigma$ are computed as pushouts in $\Set$ on the sets of wires).
	Since $s$ is a bijection on wires by assumption as well, it follows that $us$ is too, and hence the composite cospan is a pure bloom.

	The preservation of pure blooms by parallel composition~\eqref{parallel_comp} is obvious, as is the fact that the symmetry isomorphisms are pure blooms.
\end{proof}

We denote this symmetric monoidal subcategory by $\Bloom_\Sigma$.
Of course, the gs-monoidal structure morphisms~\eqref{freecd_cd} are not pure blooms, so that $\Bloom_\Sigma$ is not a gs-monoidal category.
On the other hand, the pure circuitry morphisms form a gs-monoidal subcategory which coincides with $\FreeGS_{\underline{W(\Sigma)}}$, which is equivalent to $(\FinSet / W(\Sigma))^\op$ (\Cref{degenerate}).

\begin{lemma}
	The following are equivalent for a morphism $\alpha$ in $\FreeGS_\Sigma$:
	\begin{enumerate}[label=(\roman*)]
		\item\label{pureboth} $\alpha$ is pure bloom and pure circuitry.
		\item\label{zeroC} $C(\alpha) = 0$.
		\item\label{isperm} $\alpha$ is a permutation.
		\item\label{isiso} $\alpha$ is an isomorphism.
	\end{enumerate}
\end{lemma}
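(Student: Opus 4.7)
The plan is to handle the three equivalences (i)$\Leftrightarrow$(iii), (ii)$\Leftrightarrow$(iii), (iii)$\Rightarrow$(iv) by direct unpacking of definitions (all essentially immediate from material already in the paper), and then focus the real work on (iv)$\Rightarrow$(iii), where the key tool will be the additivity of box counts under pushout composition that is recorded just before the previous lemma.

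For (i)$\Leftrightarrow$(iii), the direction (iii)$\Rightarrow$(i) is immediate. Conversely, if the representing cospan~\eqref{cd_string} is pure circuitry then $B(G)=\emptyset$, so the left monogamy condition collapses to $|p^{-1}(w)|=1$ for all $w\in W(G)$, forcing $p$ to be a bijection on wires; combined with pure bloom ($q$ a bijection) this is exactly the definition of a permutation. For (ii)$\Leftrightarrow$(iii), I would observe that this was effectively already noted in the sentence immediately preceding the definition of $F$ on permutations in the proof of \Cref{cdterm_free}; to spell it out, $C(\alpha)=0$ forces $|B(G)|=0$ together with $|q^{-1}(w)|=1$ for each wire (the only way each absolute value inside the sum can vanish once the boxes are gone), and then left monogamy again supplies bijectivity of $p$. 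The converse direction is a direct computation of the two summands. The implication (iii)$\Rightarrow$(iv) is trivial: a permutation has inverse obtained by swapping the two legs of its cospan.

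The main step is (iv)$\Rightarrow$(iii). Suppose $\alpha$ has a two-sided inverse $\beta$ in $\FreeGS_\Sigma$. As remarked just after the composition diagram~\eqref{pushout_comp}, the pushout over a discrete hypergraph satisfies $B(G+_{\underline{m}}H)=B(G)\sqcup B(H)$, so the cardinality of the set of boxes is additive under composition. Since the identity morphisms are represented by cospans with no boxes, $|B(G_\alpha)|+|B(G_\beta)|=0$, and hence both $\alpha$ and $\beta$ are pure circuitry. Thus $\alpha$ is an isomorphism inside the pure circuitry subcategory, which by \Cref{degenerate} is equivalent to $(\FinSet/W(\Sigma))^{\op}$. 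Every isomorphism in the latter is a label-preserving bijection of finite sets, and translating back through the equivalence yields a cospan representative of $\alpha$ with both legs bijective on wires and no boxes in the middle, i.e., a permutation.

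I do not anticipate a real obstacle; the only point requiring a moment's care is making sure the equivalence with $(\FinSet/W(\Sigma))^{\op}$ respects identification of isomorphisms with permutations on the nose, but this is already built into the description of pure circuitry morphisms as cospans with $p$ a (forced) bijection.
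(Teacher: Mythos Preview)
Your proposal is correct and follows essentially the same route as the paper. The paper organizes the implications as a cycle (i)$\Rightarrow$(ii)$\Rightarrow$(iii)$\Rightarrow$(iv)$\Rightarrow$(i) rather than your hub-and-spoke through (iii), but the substance is identical: left monogamy forces $p$ to be bijective whenever there are no boxes, and the crux is showing that an isomorphism must be pure circuitry. On that last point you are actually more explicit than the paper, which simply asserts ``it is clear that $\alpha$ must be pure circuitry''; your box-count additivity argument (using $B(G+_{\underline{m}}H)=B(G)\sqcup B(H)$) is exactly the justification one would want there. The only cosmetic difference is that for the final step the paper argues directly that the right leg must be bijective once $\alpha$ is a pure-circuitry isomorphism, whereas you route through the equivalence with $(\FinSet/W(\Sigma))^{\op}$---but these amount to the same observation.
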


\begin{proof}
	By left monogamy, a pure circuitry morphism is necessarily such that its left leg is an isomorphism.
	Assuming \ref{pureboth}, we therefore obtain that both legs are isomorphisms.
	Together with $\alpha$ having no boxes, we obtain \ref{zeroC}.
	The equivalence of \ref{zeroC} and \ref{isperm} was already discussed in the previous section.
	The implication \ref{isperm}$\Rightarrow$\ref{isiso} is trivial.
	Assuming \ref{isiso}, it is clear that $\alpha$ must be pure circuitry.
	Since then the left leg is an isomorphism by left monogamy, the isomorphism assumption implies that so is the right leg, making $\alpha$ pure bloom as well.
\end{proof}

\begin{proposition}
	\label{factorization}
	The pure blooms and the pure circuitry morphisms form an orthogonal factorization system for $\FreeGS_\Sigma$:
	\begin{enumerate}
		\item Every morphism factors into a pure bloom followed by pure circuitry.
		\item This factorization is unique up to a unique permutation of wires.
	\end{enumerate}
\end{proposition}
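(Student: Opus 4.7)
My plan is to construct the factorization directly from the hypergraph presentation and then extract uniqueness from the bijectivity constraints that define the two classes of morphisms.

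\emph{Existence.} Let $\alpha$ be presented by \eqref{cd_string} and set $k := |W(G)|$. Choose any bijection $\iota : \underline{k} \to W(G)$, and view $W(G)$ itself as a discrete hypergraph $\underline{W(G)}$, labeled by the restriction of $\sigma$ to wires. Define
\[
\beta \;:=\; \bigl(\, \underline{m} \xrightarrow{p} G \xleftarrow{\iota} \underline{k} \,\bigr), \qquad \gamma \;:=\; \bigl(\, \underline{k} \xrightarrow{\iota} \underline{W(G)} \xleftarrow{q_{W}} \underline{n} \,\bigr),
\]
where $q_W$ is $q$ regarded with codomain $W(G)$. Then $\beta$ is pure bloom (its right leg $\iota$ is a bijection) and $\gamma$ is pure circuitry (its middle hypergraph is discrete). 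The left-monogamy of $\beta$ reduces to that of $\alpha$ because $\iota$ is a bijection, while for $\gamma$ it holds automatically since there are no boxes and $\iota$ is a bijection; acyclicity is immediate in both cases. Now compute the composite pushout $G +_{\underline{k}} \underline{W(G)}$: since pushouts in $\FinHyp/\Sigma$ are computed componentwise on wires and on boxes of each arity, and since $\iota$ is bijective on wires while $\underline{W(G)}$ contributes no boxes, this pushout is canonically isomorphic to $G$ over $\Sigma$. Tracing the outer legs through the pushout recovers $p$ and $q$, so $\gamma \circ \beta = \alpha$.

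\emph{Uniqueness.} Suppose instead that $\alpha = \gamma' \circ \beta'$ with $\beta'$ a pure bloom presented by $(\underline{m} \to G' \xleftarrow{\cong} \underline{k'})$ and $\gamma'$ pure circuitry presented by $(\underline{k'} \to H' \leftarrow \underline{n})$. Because $H'$ has no boxes, the left-monogamy of $\gamma'$ forces its left leg $\underline{k'} \to W(H')$ to also be a bijection; in particular $|W(G')| = k' = |W(H')|$. The pushout $G' +_{\underline{k'}} H'$ then identifies wires of $G'$ and $H'$ via these two bijections and inherits all boxes from $G'$, so it is canonically isomorphic to $G'$; tracking the outer legs identifies this canonical pushout with the original $G$ up to a unique iso of cospans in $\FinHyp/\Sigma$. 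Composing the induced bijections
\[
\underline{k'} \;\xrightarrow{\cong}\; W(G') \;\xrightarrow{\cong}\; W(G) \;\xleftarrow{\;\iota\;}\; \underline{k}
\]
yields a unique permutation $\sigma : \underline{k'} \to \underline{k}$ satisfying $\beta = \sigma \circ \beta'$ and $\gamma' = \gamma \circ \sigma$, which is the unique permutation of wires relating the two factorizations.

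\emph{Main obstacle.} The actual content lies not in producing the factorization but in the bookkeeping around the pushouts: one must verify carefully that the canonical comparison map at the level of wires, obtained from the pushout universal property, agrees with the composite of the two bijections above, and that the cospan-isomorphism identifying the composite with $\alpha$ restricts to a literal permutation on the intermediate object $\underline{k}$ rather than merely an isomorphism of labeled hypergraphs. Once this tracking is made explicit --- using that in $\FinHyp/\Sigma$ a pushout of a span of bijections on wires is again a bijection on wires --- both parts of the proposition follow directly from the construction.
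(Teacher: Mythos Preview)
Your proof is correct and follows essentially the same route as the paper: factor through the discrete hypergraph $\underline{W(G)}$ with the identity/inclusion as the two middle legs, then note that any other factorization must have both middle legs bijective on wires (the bloom right leg by definition, the circuitry left leg by left monogamy), so the ambiguity is precisely the choice of bijection $\underline{k}\cong W(G)$. The paper's treatment is terser in that it phrases uniqueness directly as ``the ambiguity is the choice of total ordering on $W(G)$'' and notes that the permutation is unique because it must commute with the bijective right leg of the bloom; it then invokes \cite[Proposition~14.7]{adamek2006cats} together with the characterization of isomorphisms as permutations to conclude that this really is an orthogonal factorization system, whereas you argue the uniqueness directly via the chain of wire bijections.
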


\begin{proof}
	Given any gs-monoidal string diagram~\eqref{cd_string}, we factor it in $\cospan(\FinHyp / \Sigma)$ as
	\[
		\begin{tikzcd}
			& G & & \underline{W(G)} \\
			\underline{m} \ar{ur}{p} & & \underline{W(G)} \ar{ul}[swap]{\id} \ar{ur}{\id} & & \underline{n} \ar{ul}[swap]{q}
		\end{tikzcd}
	\]
	The two constituent cospans become gs-monoidal string diagrams upon choosing any total ordering on $W(G)$, or equivalently any hypergraph isomorphism $\underline{W(G)} \cong \underline{|W(G)|}$, and considering $\underline{|W(G)|}$ an object of $\FinHyp / \Sigma$ by equipping it with the labeling inducing by this isomorphism.
	The left cospan is then a pure bloom and the right one pure circuitry by construction.

	Concerning uniqueness, any factorization into pure bloom and pure circuitry results in two cospans such that the two middle legs are bijections on wires.
	This implies that the ambiguity in the factorization is exactly the choice of isomorphism $\underline{W(G)} \cong \underline{|W(G)|}$.
	Choosing a different one amounts precisely to conjugation by a permutation isomorphism identifying the two intermediate objects.
	The uniqueness of the permutation is obvious, since in particular it must commute with the right leg of the pure bloom cospan.

	We therefore indeed have an orthogonal factorization system by~\cite[Proposition~14.7]{adamek2006cats}, since the isomorphisms in $\FreeGS_\Sigma$ are exactly the permutations.
\end{proof}

In particular, \Cref{factorization} implies that the two subcategories define a distributive law of colored PROPs~\cite[Theorem~4.6]{lack2004props}.

\section{Free Markov Categories}
\label{freemarkov}

Given a monoidal signature $\Sigma$, we now investigate the free Markov category $\FreeMarkov_\Sigma$, satisfying an analogous $2$-categorical universal property as $\FreeGS_\Sigma$.

\begin{example}
	\label{cd_vs_markov}
	$\FreeGS_\Sigma$ is a Markov category if and only if the monoidal signature $\Sigma$ has no boxes, $B(\Sigma) = \emptyset$.
	
	Indeed the ``if'' direction is obvious since we then are in the degenerate case of \Cref{degenerate}, where the monoidal structure is cartesian, and in particular the monoidal unit is terminal.
	For the ``only if'' direction, suppose that $\Sigma$ contains a box $b \in B(\Sigma)$. Then consider $\langle b\rangle$, the gs-monoidal string diagram consisting of $b$ alone. 
	Then discarding all outputs of $b$ (if any) produces a gs-monoidal string diagram which still has one box, contradicting the naturality of discarding.
\end{example}

In order to construct $\FreeMarkov_\Sigma$, we can simply start with $\FreeGS_\Sigma$ and quotient by the naturality of discarding.
However, in practice it will be more convenient to choose one representative of each equivalence class.
We thus introduce the following normal form.

\begin{definition}
	Let a gs-monoidal string diagram	
	\[
		\alpha \coloneqq
		\begin{tikzcd}
			& G \\
			\underline{m} \ar{ur}{p} & & \underline{n} \ar{ul}[swap]{q} 
		\end{tikzcd}
	\]
	be given.
	\begin{enumerate}[label=(\roman*)]
		\item A box $b \in B(G)$ is \newterm{eliminable} if each one of its output gets discarded: for every $w \in W(G)$,
			\[
				\Iout(b,w) > 0 \quad \Longrightarrow \quad q^{-1}(w) = \emptyset \quad \land \quad \forall b' \in B(G) : \: \Iin(b', w) = 0.
			\]
		
		\item $\alpha$ is \newterm{normalized} if it has no eliminable boxes.
	\end{enumerate}
\end{definition}

Every gs-monoidal string diagram can be turned into a normalized one by repeatedly removing eliminable boxes and their output wires (if any), while all other boxes and wires remain unchanged.
Removing a box like this decreases the number of boxes, and therefore repeatedly eliminating boxes eventually terminates.
If two distinct boxes $b_1, b_2 \in B(G)$ are both eliminable, then they can be removed in either order, which implies confluence.
Therefore the process is guaranteed to terminate in a unique normalized form. We call it the \newterm{normalization}.
For a simple example, the normalization of a single effect (in the monoidal signature of \Cref{monoidal_sig}) is given by
\[
	\begin{tikzpicture}
	\begin{pgfonlayer}{nodelayer}
		\node [style=none] (0) at (0, -1) {};
		\node [style=costate] (1) at (0, 1) {$g_2$};
		\node [style=none] (2) at (-1, 0.25) {$\left(\rule{0cm}{1.6cm}\right.$};
		\node [style=none] (3) at (1, 0.25) {$\left.\rule{0cm}{1.6cm}\right)$};
		\node [style=none] (4) at (-2, 0.25) {$\norm$};
		\node [style=none] (5) at (2, 0.25) {$=$};
		\node [style=bn] (6) at (3, 1) {};
		\node [style=none] (7) at (3, -0.75) {};
	\end{pgfonlayer}
	\begin{pgfonlayer}{edgelayer}
		\draw (1) to (0.center);
		\draw (7.center) to (6);
	\end{pgfonlayer}
\end{tikzpicture}
\]

\begin{definition}
	\label{cat_markov_diagrams}
	Given a hypergraph $\Sigma$, $\FreeMarkov_\Sigma$ is the category with:
	\begin{itemize}
		\item Objects are the pairs $(n, \sigma : \underline{n} \to \Sigma)$ with $n \in \N$.
		\item Morphisms are the isomorphism classes of normalized gs-monoidal string diagrams (\Cref{cd_diagram}).
		\item Composition is defined as in $\FreeGS_\Sigma$ and subsequent normalization.
	\end{itemize}
\end{definition}

By \Cref{cd_vs_markov}, a composite of normalized gs-monoidal string diagrams need not be normalized, and a separate normalization step is required in general.
However, if $\alpha$ and $\beta$ are normalized gs-monoidal string diagrams, then also $\alpha \otimes \beta$ is already normalized: if $\alpha \otimes \beta$ contains an eliminable box, then this same box is also eliminable in $\alpha$ or $\beta$.
Thus the monoidal structure of $\FreeGS_\Sigma$ directly restricts to $\FreeMarkov_\Sigma$, and the same applies to the gs-monoidal structure.
Hence $\FreeMarkov_\Sigma$ is clearly a gs-monoidal category.

\begin{lemma}
	$\FreeMarkov_\Sigma$ is a Markov category.
\end{lemma}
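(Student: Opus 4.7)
The plan is to verify the terminal-object characterization of Markov categories from \Cref{cd_cat}: that the monoidal unit $I = (\underline{0}, !)$ is terminal in $\FreeMarkov_\Sigma$. Since $\FreeMarkov_\Sigma$ inherits a gs-monoidal structure, the discard maps $\del_X : X \to I$ provide existence, and only uniqueness needs to be shown. So I would fix an arbitrary object $X = (\underline{m}, \sigma)$ together with an arbitrary morphism $\alpha : X \to I$, represented by a normalized gs-monoidal string diagram $\underline{m} \xrightarrow{p} G \xleftarrow{q} \underline{0}$ (where $q$ is forced to be the unique map from $\underline{0}$), and aim to prove $\alpha = \del_X$.

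The key observation is that the emptiness of $q$ makes the clause $q^{-1}(w) = \emptyset$ in the definition of eliminability automatic for every output wire $w$ of every box. Consequently, a box $b \in B(G)$ is eliminable if and only if no other box of $G$ uses any output wire of $b$ as an input, and in particular every box with no outputs at all is vacuously eliminable.

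The main step is then to argue that $B(G) = \emptyset$. Assume for contradiction that $B(G) \neq \emptyset$. Since $\alpha$ is normalized, no box in $B(G)$ is eliminable: every box has at least one output wire (else the vacuous case would apply), and among those output wires at least one must be an input of some other box. Choosing such a ``successor'' for each $b$ gives a function $\mathrm{succ} : B(G) \to B(G)$; iterating from any starting point must enter a cycle by finiteness, and this cycle together with the connecting wires yields a closed loop of the form $(w_1, b_1, \ldots, w_n, b_n, w_1)$ in the sense of \Cref{acyclic} (acyclicity of $G$ already guaranteed that each chosen successor is actually distinct from $b$, so the loop has positive length). This contradicts the acyclicity of $G$.

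Once $B(G) = \emptyset$ is established, left monogamy reduces to $|p^{-1}(w)| = 1$ for every $w \in W(G)$, so $p$ is a bijection and the labeling $\sigma : G \to \Sigma$ is determined by it. The cospan representing $\alpha$ is then isomorphic over $\Sigma$ to $\underline{m} \xrightarrow{\id} \underline{m} \xleftarrow{!} \underline{0}$, the representative of $\del_X$ from~\eqref{freecd_cd}, so $\alpha = \del_X$. The only genuine subtlety is the successor/acyclicity argument that rules out all boxes, and this is short once one sees that the empty right leg trivializes the first clause of eliminability.
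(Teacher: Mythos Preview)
Your proposal is correct and follows essentially the same route as the paper: show that $\underline{0}$ is terminal by arguing that any normalized gs-monoidal string diagram $\alpha : \underline{m} \to \underline{0}$ must have $B(G) = \emptyset$, hence be pure circuitry and equal to $\del$. The paper compresses the key step into the single clause ``every such box would be eliminable,'' whereas you spell out the successor/acyclicity argument (mirroring the proof of \Cref{pieces_exist}) to exhibit an eliminable box; your version is in fact more precise, since literally not every box need be eliminable at once---only some box must be, which already contradicts normalization. One small quibble: the parenthetical invoking acyclicity to ensure $\mathrm{succ}(b) \neq b$ is unnecessary and slightly circular, since a box that is its own successor already yields a length-$1$ cycle and the contradiction with acyclicity follows directly.
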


\begin{proof}
	It is enough to show that the monoidal unit $\underline{0}$ is terminal.
	If $\alpha : \underline{n} \to \underline{0}$ is any normalized string diagram, then it cannot have any boxes, since every such box would be eliminable.
	Thus $\alpha$ is pure circuitry, and therefore equal to the gs-monoidal string diagram which simply discards all inputs.
\end{proof}

\begin{lemma}
	Mapping every gs-monoidal string diagram to its normalization is an identity-on-objects strict gs-monoidal functor
	\[
		\norm \: : \: \FreeGS_\Sigma \longrightarrow \FreeMarkov_\Sigma.
	\]
\end{lemma}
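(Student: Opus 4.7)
The plan is to verify each required property in turn, leveraging the confluence of the box-elimination rewriting already established in the discussion preceding \Cref{cat_markov_diagrams}. The easy pieces are that normalization is well-defined on isomorphism classes (since eliminability of a box is preserved by hypergraph isomorphism) and identity-on-objects (since objects are represented by cospans whose middle hypergraph has no boxes, hence are already normalized). The same observation shows that the identity morphisms, as well as the copy and discard cospans from~\eqref{freecd_cd}, have no boxes at all and are therefore fixed on the nose by $\norm$.

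Next I would handle strict monoidality. Since $\alpha \otimes \beta$ is given by the disjoint union~\eqref{parallel_comp}, with legs and labels induced componentwise, a box $b$ in $G_\alpha + G_\beta$ can only have output wires in the component it belongs to, and those wires can only be hit by input interfaces or boxes of the same component. Thus $b$ is eliminable in $\alpha \otimes \beta$ if and only if it is eliminable in the factor containing it, which yields $\norm(\alpha \otimes \beta) = \norm(\alpha) \otimes \norm(\beta)$. Combined with the previous paragraph, this already gives that $\norm$ preserves all the gs-monoidal structure strictly, once functoriality is established.

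The core step is functoriality, i.e.\ $\norm(\beta \circ \alpha) = \norm(\norm(\beta) \circ \norm(\alpha))$, where $\circ$ on the left is pushout composition in $\FreeGS_\Sigma$. The plan is to show that removing an eliminable box before composing produces the same multiset of available elimination moves as removing it after composing; once this is done, confluence of the elimination rewriting forces both sides to agree with the common normal form obtained from $\beta \circ \alpha$. Concretely, suppose $b \in B(G_\alpha)$ is eliminable in $\alpha$: then every output wire of $b$ lies outside the image of the right leg $q_\alpha$ and is not an input of any other box of $G_\alpha$. Since the pushout of hypergraphs is computed componentwise in $\Set$ and identifies only wires in the images of $q_\alpha$ and the left leg of $\beta$, the output wires of $b$ are untouched by the pushout, and in $\beta \circ \alpha$ they are still neither in the image of the right leg of the composite nor an input of any box. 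Hence $b$ remains eliminable after composition, and the analogous argument handles $b \in B(G_\beta)$. Eliminating all these ``old'' eliminable boxes first and then normalizing the composite yields, by confluence, the same result as normalizing the raw composite directly.

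The main obstacle will be Step~3: verifying that the pushout composition neither destroys nor spuriously creates the eliminability of a given box without introducing a case distinction that gets out of hand. The delicate point is ruling out the possibility that a ``dangling'' output of $b$ might, after pushout, become identified with an input of some other box or with a new output interface; the argument above shows this cannot happen because $b$'s outputs sit, by definition of eliminability, outside the image of the gluing leg. All other compatibility checks reduce, by confluence, to this single observation.
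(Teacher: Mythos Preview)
Your approach is the same as the paper's: reduce both functoriality and monoidality to the claim that an eliminable box in either factor remains eliminable after composing (respectively tensoring), then invoke confluence of the elimination rewriting. The paper's proof is in fact terser than yours, merely asserting that ``any eliminable box in $\alpha$ or $\beta$ is still eliminable in $\alpha \circ \beta$'' and that removal ``does not interfere with the pushout composition.''

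One small correction is needed. For a box $b$ in the \emph{second} morphism $\beta$, you claim its output wires lie outside the image of the gluing leg ``by definition of eliminability.'' That is not quite right: eliminability only tells you the outputs of $b$ avoid the \emph{right} leg $s_\beta$, whereas the gluing leg on the $\beta$ side is the \emph{left} leg $r_\beta$. What actually keeps the outputs of $b$ out of the image of $r_\beta$ is left monogamy (an output wire of a box cannot also lie in the image of the left leg). Once you invoke left monogamy there, the ``analogous argument'' for $b \in B(G_\beta)$ goes through as you intend.
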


\begin{proof}
	We need to prove functoriality and monoidality, which amounts to the equations
	\begin{align*}
		\norm(\alpha \circ \beta) & \, = \, \norm(\norm(\alpha) \circ \norm(\beta)),	\\
		\norm(\alpha \otimes \beta) & \, = \, \norm(\alpha) \otimes \norm(\beta),
	\end{align*}
	whenever these make sense.
	By the normal form property of normalization, for functoriality it is enough to show that $\alpha \circ \beta$ reduces to $\norm(\alpha) \circ \norm(\beta)$, which follows by induction on the number of boxes, noting that any eliminable box in $\alpha$ or $\beta$ is still eliminable in $\alpha \circ \beta$, and noting that removing an eliminable box and its output wires does not interfere with the pushout composition.
	The monoidality follows similarly by showing that $\alpha \otimes \beta$ reduces to $\norm(\alpha) \otimes \norm(\beta)$.
\end{proof}

Under the correspondence of \Cref{cdterm_free1}, the functor $\norm$ corresponds to the hypergraph morphism $\Sigma \to \hyp(\FreeMarkov_\Sigma)$ given by the composite
\[
	\begin{tikzcd}[column sep=4pc]
		\Sigma \ar{r} & \hyp(\FreeGS_\Sigma) \ar{r}{\hyp(\norm)} & \hyp(\FreeMarkov_\Sigma).
	\end{tikzcd}
\]
This identity-on-wires morphism sends every box $b$ to $\norm{\langle b\rangle}$, which is $\langle b \rangle$ itself unless $b$ has no outputs, in which case it gets mapped to the gs-monoidal string diagram that discards all its inputs.

Here is the Markov category version of \Cref{cdterm_free}.

\begin{theorem}
	\label{markov_free}
	Let $\Sigma$ be a monoidal signature. Then composing with the canonical hypergraph morphism $\Sigma \to \hyp(\FreeMarkov_\Sigma)$ induces an equivalence
	\[
		\gsCat(\FreeMarkov_\Sigma, \cC) \: \cong \: \CatHyp(\Sigma, \hyp(\cC))
	\]
	for every Markov category $\cC$.
\end{theorem}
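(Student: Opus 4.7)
The plan is to reduce \Cref{markov_free} to \Cref{cdterm_free} via the normalization functor $\norm : \FreeGS_\Sigma \to \FreeMarkov_\Sigma$. Since the canonical hypergraph morphism $\Sigma \to \hyp(\FreeMarkov_\Sigma)$ is by construction the composite $\Sigma \to \hyp(\FreeGS_\Sigma) \xrightarrow{\hyp(\norm)} \hyp(\FreeMarkov_\Sigma)$, the functor in question factors as
\[
\gsCat(\FreeMarkov_\Sigma, \cC) \xrightarrow{(-) \circ \norm} \gsCat(\FreeGS_\Sigma, \cC) \xrightarrow{\,\sim\,} \CatHyp(\Sigma, \hyp(\cC)),
\]
where the second arrow is the equivalence of \Cref{cdterm_free}. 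It therefore suffices to prove that precomposition with $\norm$ is an equivalence of categories whenever $\cC$ is a Markov category; this is exactly where the terminality of $I$ enters.

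For essential surjectivity, given a strong gs-monoidal functor $\tilde{F} : \FreeGS_\Sigma \to \cC$, the goal is to factor it as $\tilde{F} = F \circ \norm$ for some strong gs-monoidal $F : \FreeMarkov_\Sigma \to \cC$. The central lemma is the invariance $\tilde{F}(\alpha) = \tilde{F}(\norm(\alpha))$ for every gs-monoidal string diagram $\alpha$, which reduces by induction on the number of eliminable boxes to checking that removing a single eliminable box $b$ leaves $\tilde{F}(\alpha)$ unchanged. Using multiplicativity of $\del$ across the tensor product, the image $\tilde{F}(b)$ together with the downstream discards of all its outputs collects into a composite $\del_Y \circ \tilde{F}(b)$ on the tensor of output types; naturality of $\del$ in the Markov category $\cC$ rewrites this as $\del_X$ on the input type of $b$, and the counit equation $(\del \otimes \id) \circ \copi = \id$ absorbs any now-superfluous tines of implicit copies coming from wires that previously also fed $b$. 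This is precisely the effect in $\cC$ of removing $b$ from the hypergraph. One then defines $F$ on objects as $\tilde{F}$ and on a morphism of $\FreeMarkov_\Sigma$ by picking any preimage under $\norm$ and applying $\tilde{F}$; the invariance lemma is exactly well-definedness, and functoriality, strong gs-monoidality, and the coherence data for $F$ are inherited from those of $\tilde{F}$ since $\norm$ is an identity-on-objects strict gs-monoidal functor that is surjective on morphisms. Uniqueness of $F$ up to isomorphism follows because two such extensions agree after postcomposing with the (essentially surjective) $\norm$.

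For full faithfulness on 2-cells, note that $\norm$ is identity on objects. A gs-monoidal transformation $\gamma : F_1 \norm \Rightarrow F_2 \norm$ therefore has components $\gamma_X$ indexed by objects $X$ of $\FreeMarkov_\Sigma$, and these same components assemble into a prospective transformation $F_1 \Rightarrow F_2$ whose naturality on any morphism $f$ of $\FreeMarkov_\Sigma$ follows by picking a lift $\tilde{f}$ of $f$ under $\norm$ and invoking naturality of $\gamma$ at $\tilde{f}$; monoidality transports identically. This construction is inverse to precomposition with $\norm$, yielding the desired bijection on 2-cells. The main obstacle in the entire argument is the invariance lemma of the essential surjectivity step, where one must patiently match the combinatorial removal of an eliminable box with a precise rewrite in $\cC$ using discard multiplicativity, naturality of $\del$, and the comonoid counit equation; as in the proof of \Cref{cdterm_free}, strictification of $\cC$ via~\cite[Corollary~4.10]{fongspivak2019bells} allows one to carry out this local analysis without cluttering it with coherence isomorphisms.
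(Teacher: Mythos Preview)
Your proposal is correct and follows essentially the same route as the paper: factor the universal property through \Cref{cdterm_free} via precomposition with $\norm$, then argue that $(-)\circ\norm$ is an equivalence whenever $\cC$ is Markov, using that $\norm$ is identity-on-objects for full faithfulness and an invariance lemma for essential surjectivity.

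The one noteworthy difference is in how you establish the invariance $\tilde F(\alpha)=\tilde F(\beta)$ when $\alpha$ and $\beta$ differ by removal of a single eliminable box $b$. You argue by a local rewrite in $\cC$: gather the discards on the outputs of $b$ using multiplicativity of $\del$, apply naturality of $\del$ to turn $\del_Y\circ\tilde F(b)$ into $\del_X$, and then invoke the counit law to clean up superfluous copy-tines on the former input wires of $b$. The paper instead factors \emph{in $\FreeGS_\Sigma$}: it finds a single $\alpha'$ with
\[
\alpha = \bigl(\id \otimes (\del \circ \langle b\rangle)\bigr)\circ\alpha', \qquad \beta = (\id \otimes \del)\circ\alpha',
\]
so that functoriality of $\tilde F$ reduces the claim to a single application of naturality of $\del$ in $\cC$. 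This is a bit cleaner, because the counit absorption you mention is already absorbed into the cospan equality $\beta=(\id\otimes\del)\circ\alpha'$ and need not be argued separately in $\cC$. Your version is still valid, but to make the ``local rewrite'' fully rigorous you are in effect relying on such a factorization anyway; you may as well state it and drop the extra comonoid manipulations.
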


\begin{proof}
	By \Cref{cdterm_free}, it is enough to show that composing with $\norm$ induces an equivalence
	\[
		\begin{tikzcd}[column sep=1.5pc]
			\gsCat(\FreeMarkov_\Sigma, \cC) \ar{r}	& \gsCat(\FreeGS_\Sigma, \cC)
		\end{tikzcd}
	\]
	for every Markov category $\cC$.
	Faithfulness is obvious since $\norm$ is identity-on-objects.
	Fullness holds by the same reason together with fullness of $\norm$, which enters in the verification of naturality.
	For essential surjectivity, it is enough to argue that every strong gs-monoidal functor $F : \FreeGS_\Sigma \to \cC$ factors through $\norm$, or equivalently that $F(\alpha) = F(\beta)$ whenever $\norm(\alpha) = \norm(\beta)$.
	We can assume without loss of generality that $\alpha$ and $\beta$ differ by a single reduction step, consisting of the elimination of an eliminable box $b$ in $\alpha$.
	We can then write $\alpha$ as a composite gs-monoidal string diagram
	\[
		\alpha = \left[ \id \otimes (\del \circ \langle b \rangle) \right] \circ \alpha'
	\]
	in such a way that
	\[
		\beta = \left[ \id \otimes \del \right] \circ \alpha'.
	\]
	The naturality of discarding in $\cC$ then implies $F(\alpha) = F(\beta)$.
\end{proof}

Proceeding by essentially the same arguments, we also obtain the following.

\begin{corollary}
	\label{markov_free1}
	Let $\cC$ be a strict Markov category whose monoid of objects is the free monoid with generating set $W(\Sigma)$.	
	Then restricting along $\Sigma \to \hyp(\FreeMarkov_\Sigma)$ induces a bijection between:
	\begin{itemize}
		\item The strict gs-monoidal functors $\FreeMarkov_\Sigma \to \cC$.
		\item The hypergraph morphisms $\Sigma \to \hyp(\cC)$.
	\end{itemize}
	This bijection restricts to a bijection between:
	\begin{itemize}
		\item The identity-on-objects strict gs-monoidal functors $\FreeMarkov_\Sigma \to \cC$.
		\item The identity-on-wires hypergraph morphisms $\Sigma \to \hyp(\cC)$.
	\end{itemize}
\end{corollary}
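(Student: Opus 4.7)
The plan is to combine Corollary \ref{cdterm_free1} with the normalization functor $\norm : \FreeGS_\Sigma \to \FreeMarkov_\Sigma$, in direct analogy with how Theorem \ref{markov_free} was deduced from Theorem \ref{cdterm_free}, but checking ``strict + bijection'' in place of ``strong + equivalence''. Since we have in hand the hypergraph morphism $\Sigma \to \hyp(\FreeMarkov_\Sigma)$ defined as the composite of the canonical map $\Sigma \to \hyp(\FreeGS_\Sigma)$ with $\hyp(\norm)$, it is enough to show that precomposition with $\norm$ induces a bijection between strict gs-monoidal functors $\FreeMarkov_\Sigma \to \cC$ and strict gs-monoidal functors $\FreeGS_\Sigma \to \cC$ that extend some $f : \Sigma \to \hyp(\cC)$, and that this restricts further on the identity-on-objects levels.

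First, given $f : \Sigma \to \hyp(\cC)$, I would invoke \Cref{cdterm_free1} to obtain a unique strict gs-monoidal functor $\tilde{F} : \FreeGS_\Sigma \to \cC$ extending $f$; the assumption on the monoid of objects of $\cC$ matches the hypothesis of that corollary. Next, I would show that $\tilde{F}$ factors uniquely through $\norm$ as $\tilde{F} = F \circ \norm$ for some strict gs-monoidal functor $F : \FreeMarkov_\Sigma \to \cC$. Well-definedness of $F$ on morphisms reduces, by an obvious induction on the number of reduction steps, to the case where two gs-monoidal string diagrams $\alpha, \beta$ with $\norm(\alpha) = \norm(\beta)$ differ by the elimination of a single eliminable box $b$; then writing $\alpha = [\id \otimes (\del \circ \langle b \rangle)] \circ \alpha'$ and $\beta = [\id \otimes \del] \circ \alpha'$, the naturality of $\del$ in $\cC$ (guaranteed by $\cC$ being a Markov category) gives $\tilde{F}(\alpha) = \tilde{F}(\beta)$, exactly as in the essential surjectivity part of \Cref{markov_free}.

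For the rest, since $\norm$ is identity-on-objects, surjective on morphisms, and strict gs-monoidal, it is an epimorphism in $\gsCat$ that reflects and preserves the relevant structure; it follows that the descended $F$ is automatically strict gs-monoidal, that it restricts to $f$ along $\Sigma \to \hyp(\FreeMarkov_\Sigma)$, and that it is the unique such functor (any other candidate $F'$ would satisfy $F' \circ \norm = \tilde{F}$ by the uniqueness clause of \Cref{cdterm_free1}, and then $F = F'$ by the epimorphism property of $\norm$). The identity-on-objects variant of the bijection then comes for free: since $\norm$ is identity-on-objects, $F$ is identity-on-objects if and only if $\tilde{F}$ is, and by the second half of \Cref{cdterm_free1} this is equivalent to $f$ being identity-on-wires.

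The only nonroutine step is the factoring through $\norm$, and there the essential input is the naturality of discarding in $\cC$; everything else is bookkeeping that parallels the gs-monoidal case already handled in \Cref{cdterm_free1}. I expect no genuine obstacle beyond being careful that $\norm$ really does have the properties (identity-on-objects, strict gs-monoidal, surjective on morphisms) needed to transfer strictness and uniqueness from $\tilde{F}$ to $F$; these are immediate from the construction of $\norm$ given just before \Cref{markov_free}.
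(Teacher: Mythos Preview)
Your proposal is correct and is exactly the approach the paper has in mind: the paper's own proof consists of the single sentence ``Proceeding by essentially the same arguments, we also obtain the following,'' and what you have written is a faithful unpacking of those arguments, combining \Cref{cdterm_free1} with the factorization through $\norm$ established in the proof of \Cref{markov_free}.
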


\section{GS-monoidal string diagrams as generalized causal models}
\label{causal}

Here, we briefly discuss how free Markov categories provide a notion of causal model which is both more general and more intuitive than the traditional one.
We refer to~\cite{fritz2023dseparation} for more details and examples as well as a proof of the \emph{d-separation criterion} for Bayesian networks in categorical terms.\footnote{See also also refer to the works of Fong~\cite{fong2013causal} and Yin and Zhang~\cite{yinzhang2022causal} for earlier considerations on the relation to Bayesian networks.}
Besides the greater generality of the categorical formalism, another important advantage is that this criterion can be formulated as a statement about topological connectedness of string diagrams, which makes it more intuitive than the traditional one.

Traditionally, causal models are defined as \emph{directed acyclic graphs}.
A \emph{Bayesian network} is an assignment of random variables to the vertices in such a way that their dependencies suitably match the causal structure defined by the graph~\cite{pearl2009causality}.
In the following, no prior knowledge of Bayesian networks will be needed.
Instead, let us focus on our proposed definition.

\begin{definition}
	A \newterm{generalized causal model} is a normalized gs-monoidal string diagram
	\[\begin{tikzcd}
		& G \\
		\underline{m} \ar{ur}{p} & & \underline{n} \ar{ul}[swap]{q}
	\end{tikzcd}\]
	in a monoidal signature $\Sigma$, or equivalently a morphism in $\FreeMarkov_\Sigma$, such that the right leg $q$ is injective.
\end{definition}

The injectivity condition for $q$ amounts to the requirement that every wire can become an overall output in at most one way.
This guarantees that every output can be interpreted as its own random variable; while if $q$ is not injective, then several outputs represent copies of the same random variable, which is an uninteresting case.

\begin{figure}
	\begin{tikzpicture}
	\begin{pgfonlayer}{nodelayer}
		\node [style=none] (0) at (-0.75, -1) {};
		\node [style=none] (1) at (-1.5, 1.5) {};
		\node [style=none] (2) at (-0.25, -1.25) {};
		\node [style=none] (3) at (-1.25, -3) {$A$};
		\node [style=none] (4) at (1, -2.5) {$B$};
		\node [style=none] (5) at (0.25, -1) {};
		\node [style=medium box] (6) at (-0.25, -0.75) {$f$};
		\node [style=none] (7) at (-0.75, -1) {};
		\node [style=none] (8) at (-0.25, -1) {};
		\node [style=none] (9) at (0.25, -1) {};
		\node [style=state] (10) at (0.5, -3.25) {$p$};
		\node [style=none] (11) at (0.75, -2.5) {};
		\node [style=none] (12) at (0.25, -3.25) {};
		\node [style=none] (13) at (0.75, -3.25) {};
		\node [style=none] (15) at (-1.25, 3) {$A$};
		\node [style=none] (16) at (-1, 1.5) {};
		\node [style=medium box] (17) at (-1, 2.25) {$f$};
		\node [style=none] (18) at (-1.5, 2) {};
		\node [style=none] (19) at (-1, 2) {};
		\node [style=none] (20) at (-0.5, 1.5) {};
		\node [style=none] (21) at (-1, -3.25) {};
		\node [style=none] (22) at (-2.25, 3.5) {};
		\node [style=none] (23) at (-1, 3.5) {};
		\node [style=none] (24) at (-0.25, -2.75) {$X$};
		\node [style=none] (25) at (-0.5, 0) {$A$};
		\node [style=none] (26) at (-1, -4.25) {$3$};
		\node [style=none] (27) at (-2.25, 4) {$1$};
		\node [style=none] (28) at (-1, 4) {$2$};
		\node [style=none] (29) at (-3, -4.25) {$1$};
		\node [style=none] (30) at (-3, -3.75) {};
		\node [style=none] (31) at (-3.25, -3) {$A$};
		\node [style=bn] (32) at (-3, -2.25) {};
		\node [style=none] (33) at (-2, -4.25) {$2$};
		\node [style=none] (34) at (-2, -3.75) {};
		\node [style=none] (35) at (2.75, -4.25) {$4$};
		\node [style=none] (36) at (2.75, -3.75) {};
		\node [style=none] (37) at (2.25, -2) {$B$};
		\node [style=none] (38) at (-2.25, -3) {$X$};
		\node [style=bn] (39) at (-2, -2.5) {};
		\node [style=bn] (40) at (0, -2.25) {};
		\node [style=none] (41) at (-0.5, 2) {};
		\node [style=none] (42) at (-1, -3.75) {};
		\node [style=bn] (43) at (-0.25, 0.25) {};
		\node [style=none] (44) at (1, 3.5) {};
		\node [style=none] (45) at (1, 4) {$3$};
	\end{pgfonlayer}
	\begin{pgfonlayer}{edgelayer}
		\draw (7.center) to (0.center);
		\draw (8.center) to (2.center);
		\draw (9.center) to (5.center);
		\draw [in=-90, out=90, looseness=1.25] (13.center) to (11.center);
		\draw (19.center) to (16.center);
		\draw [in=90, out=-90] (2.center) to (21.center);
		\draw (1.center) to (18.center);
		\draw (32) to (30.center);
		\draw [in=-90, out=15] (32) to (7.center);
		\draw [in=-90, out=90, looseness=0.75] (32) to (16.center);
		\draw [in=-90, out=90, looseness=0.50] (36.center) to (20.center);
		\draw [in=90, out=-90, looseness=1.50] (40) to (12.center);
		\draw (39) to (34.center);
		\draw [in=-90, out=90] (11.center) to (9.center);
		\draw (41.center) to (20.center);
		\draw (42.center) to (21.center);
		\draw (23.center) to (17);
		\draw [in=165, out=-90, looseness=0.75] (22.center) to (32);
		\draw (6) to (43);
		\draw [in=-90, out=165] (43) to (1.center);
		\draw [in=15, out=-90, looseness=0.75] (44.center) to (43);
	\end{pgfonlayer}
\end{tikzpicture}
	\caption{A normalized gs-monoidal string diagram in the monoidal signature $\Sigma$ of \Cref{monoidal_sig}.}
	\label{markov_string_hypergraph}
\end{figure}
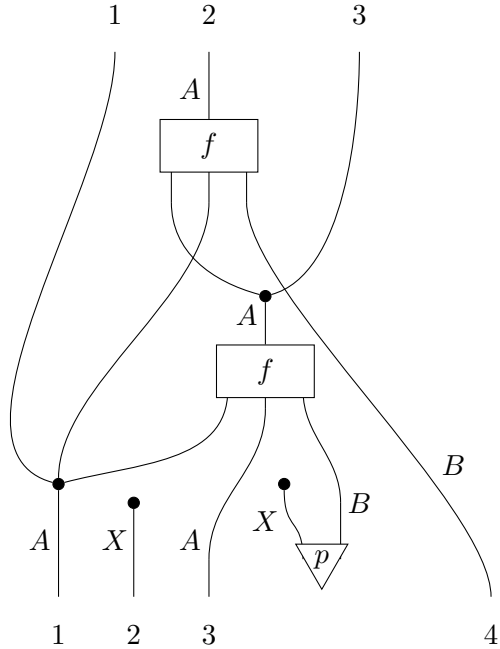

For example, consider the normalized gs-monoidal string diagram depicted in \Cref{markov_string_hypergraph}, and denote it by $\alpha$.
It depicts a generalized causal model for a Markov kernel with four inputs and three outputs, where the three outputs are of the same type as the first and third input.
In other words, it is a statistical model for Markov kernels of the form
\[
	M \: : \: A' \otimes X' \otimes A' \otimes B' \longrightarrow A' \otimes A' \otimes A',
\]
where $A'$, $B'$ and $X'$ are fixed measurable spaces (or just finite sets).
We say that such $M$ is \newterm{compatible} with the model $\alpha$ if there exists a Markov functor $F : \FreeMarkov_\Sigma \to \Stoch$ such that 
\[
	F(A) = A', \qquad F(B) = B', \qquad F(X) = X'
\]
and\footnote{This equation must be understood as holding modulo the coherence isomorphisms of $F$, which make the domain of the left-hand side, which is $F(A \otimes X \otimes A \otimes B)$, match up with the domain of the right-hand side, which is $F(A) \otimes F(X) \otimes F(A) \otimes F(B)$.}
\[
	F(\alpha) = M.
\]
This last equation is the main compatibility condition, saying that $M$ needs to be implementable using a causal structure of the given shape $\alpha$.
It formalizes the intuitive idea of the gs-monoidal string diagram $\alpha$ depicting the actual causal mechanisms: 
compatibility means that one can find a particular distribution $p$ and a Markov kernel $f$ such that plugging these pieces together, using $\alpha$ as a blueprint, results in $M$.

\begin{remark}
	Of course, there is nothing specific to Markov kernels in the category $\Stoch$ here: the same definitions can be used as the definition of compatibility between a morphism in any Markov category and the specification of a causal structure by a gs-monoidal string diagram.
\end{remark}

Defined like this, generalized causal models have the following advantage over causal models as usually defined in terms of directed acyclic graphs:
\begin{itemize}
	\item Latent variables are built in from the start.
		For example in \Cref{markov_string_hypergraph}, the left output of $p$ does not connect to an output interface and is therefore not part of the Markov kernel that is being modeled.
		In the traditional setting of Bayesian networks, this would be considered a latent variable.
		The analogous statement applies to the output of the first instance of $f$.

		The gs-monoidal string diagrams that do not have any latent variables in this sense---and do not have any unnecessary duplications either---are exactly the pure blooms (\Cref{defLCDRCD}).
	\item It is possible and natural to encode the requirement for the same causal mechanism to appear multiple times, as in the Markov kernel $f$ appearing twice in \Cref{markov_string_hypergraph}.
		For example, this is relevant for the inflation technique in causal inference~\cite{wolfe2019inflation,gitton2022inflation}. It seems like a natural condition in general, since real-world systems often contain one and the same mechanisms several times.
	\item Generalized causal models are models for \emph{Markov kernels} rather than probability distributions.
		In other words, they naturally allow for the consideration of ``control'' quantities which do not have any particular distribution, but rather serve as overall input variables.
		This is a situation often considered for example in physics in the context of Bell's theorem, where the ``settings'' of the two ``parties'' do not have a fixed distribution~\cite{brunner2013bell}. It is sometimes implemented in the conventional formalism by an additional annotation that writes observed variables in circles and control variables in squares~\cite{shpitser2014nested}.
	\item GS-monoidal string diagrams provide an arguably more intuitive representation of the information flow than a directed acyclic graph does. 
		One simply needs to represent each quantity as a wire, and work out the mechanisms by which particular variables get transformed into other ones and draw these as boxes.
		This is related to the fact that interventions have a simple and intuitive description in terms of string diagram surgery~\cite{jacobs2021surgery}.
	\item More specifically, conditional independences are very easy to read off from the gs-monoidal string diagram. It amounts to first marginalizing over all outputs that one is not interested in (and applying a normalization), and then the conditional independence holds if and only if the diagram disconnects upon further removing all the wires that one conditions on~\cite[Remark~12.21]{fritz2020synthetic}.\footnote{We thank Rob Spekkens for sharing this observation with us.}
		This is in stark contrast with the high complexity of the $d$-separation criterion for Bayesian networks.
	\item Variables taking values in arbitrary measurable spaces, and continuous variables in particular, can be dealt with in the exact same way as discrete variables.
		The difference merely lies in the choice of a different target category, namely $\Stoch$ or $\BorelStoch$ as opposed to $\FinStoch$.
\end{itemize}
Of course, the first four features are merely about notation, since it is easy to introduce additional annotations on Bayesian networks. But this does not make them insignificant.
The last feature really is a feature of the Markov categories approach to probability in general, and is thus not specific to causal models.

In summary, our claims are that defining a generalized causal model as a morphism in a free Markov categories is feasible, and that this has a number of attractive features that the traditional notion of causal model does not have.
We leave more detailed investigations of generalized causal models in this sense to future work.
One may expect Patterson's work on statistical models in terms of Markov categories on the connections to categorical logic~\cite{patterson2020algebra} to be helpful in this regard.

\section*{Acknowledgements and author declarations}

We thank Andreas Klingler for discussions and fruitful collaboration on~\cite{fritz2023dseparation}.
The first author acknowledges funding by the Austrian Science Fund (FWF) through project P 35992-N.
The authors have no relevant financial or non-financial interests to disclose.
Data availability statement: N/A

\bibliographystyle{unsrt}
\bibliography{ref.bib}

\end{document}